\newtheorem{theorem}{Theorem}[section]
\newtheorem{lemma}[theorem]{Lemma}
\newtheorem{corollary}[theorem]{Corollary}
\newtheorem{definition}[theorem]{Definition}
\newtheorem{remark}[theorem]{Remark}
\newtheorem{example}[theorem]{Example}
\newcommand{\R}{{\mathbf R}}
\newcommand{\Z}{{\mathbf Z}}
\renewcommand{\int}{\rm Int}
\newcommand{\E}{\mathbb E}
\newcommand{\PP}{{\mathbb P}}
\newcommand{\p}{{\mathbf p}}
\newcommand{\aalpha}{{\mathbf \alpha}}
\newcommand{\Mu}{{\mathcal M}}
\begin{document}

\title{Random simplicial complexes}         
\author{A. Costa and M. Farber}        
\date{February 28, 2015}          
\maketitle

\section{Introduction}
The study of random topological objects (such as random simplicial complexes and random manifolds) is motivated by potential applications to modelling of large complex systems 
in various engineering and computer science applications. 
Random topological objects are also of interest from pure mathematical point view since they can be used for constructing curious examples of topological objects with 
rare combinations of topological properties. 

Several models of random manifolds and random simplicial complexes were suggested and studied recently, see \cite{Ksurvey} for a survey. 
One may mention random surfaces \cite{PS}, random 3-manifodls
\cite{DT}, random configuration spaces of linkages 
 \cite{F}. 
The present paper is was inspired by
the model of random simplicial complexes developed by Linial, Meshulam and Wallach \cite{LM} , \cite{MW}. 
In the first paper \cite{LM} the authors 
studied an analogue of the classical Erd\"os - R\'enyi \cite{ER} model of random graphs in the situation of 2-dimensional simplicial complexes. In the following paper 
\cite{MW} 
a more general model of $d$-dimensional 
random simplicial complexes was studied. The random simplicial complexes of \cite{LM} and \cite{MW} have the complete $(d-1)$-skeleton and their \lq\lq randomness is concentrated in the top dimension\rq\rq.
More specifically, one starts with the full $(d-1)$-skeleton of an $(n-1)$-dimensional simplex and adds $d$-faces at random, independently of each other, with probability $p$. 

A different model of random simplicial complexes was studied by M. Kahle \cite{Kahle1}, \cite{Kahle3} and by some other authors. 
These are clique complexes of random Erd\"os - R\'enyi graphs; here one takes a random graph in the 
Erd\"os - R\'enyi model and declares as a simplex every subset of vertices which form a {\it clique}, i.e. such that every two vertices of the subset are connected by an edge. 
Compared with the Linial - Meshulam model, the clique complex has \lq\lq randomness\rq\rq\,  in dimension one but it influences the structure in all the higher dimensions. 

In this paper we propose a more general and more flexible model of random simplicial complexes with randomness in all dimensions. 
We start with a set of $n$ vertices and retain each of them with probability $p_0$; on the next step we connect every pair of retained vertices by an edge with probability $p_1$, and then fill in every triangle in the obtained random graph with probability $p_2$, and so on. 
As the result we obtain a random simplicial complex depending on the set of probability parameters 
$$(p_0, p_1, \dots, p_r), \quad 0\le p_i\le 1.$$
Our multi-parameter random simplicial complex includes both Linial-Meshulam and random clique complexes as special case. 
Topological and geometric properties of this random simplicial complex depend on the whole set of parameters and their thresholds can be understood as convex subsets and not as single numbers as in all the previously studied models. We mainly focus on foundations and on containment properties of our multi-parameter random simplicial complexes. One may associate to any finite simplicial complex $S$ a reduced density domain $\tilde \mu(S)\subset \R^r$ (which is a convex domain) which fully controls information about the values of the multi-parameter for which the random complex 
contains $S$ as a simplicial subcomplex. 
We also analyse balanced simplicial complexes and give positive and negative examples. We apply these results to describe dimension of a random simplicial complex. 

In a following paper we shall address other topological and geometric properties of random simplicial complexes depending on multiple parameters (such as their homology and the fundamental group).

The authors thank Thomas Kappeler for useful discussions. 

\section{The definition and basic properties.}  

\subsection{The model} \label{1.1}
   Let $\Delta_n$ denote the simplex with the vertex set $\{1, 2, \dots, n\}$. We view $\Delta_n$ as an abstract simplicial complex of dimension $n-1$. 
Given a simplicial subcomplex $Y\subset \Delta_n$, we denote by $f_i(Y)$ the number of {\it $i$-faces} of $Y$ (i.e. $i$-dimensional simplexes of $\Delta_n$ contained in $Y$). 
\begin{definition}
An external face of a subcomplex $Y\subset \Delta_n$ is a simplex $\sigma \subset \Delta_n$ such that $\sigma \not\subset Y$ but the boundary of $\sigma$ is contained in $Y$, 
$\partial \sigma \subset Y$. 
\end{definition}
We shall denote by $e_i(Y)$ the number of 
$i$-dimensional external faces of $Y$. 
Note that for $i=0$, 
we have $e_0(Y)+f_0(Y)=n$ and for $i>0$,
$$f_i(Y)+e_i(Y)\le \binom n {i+1}.$$

Fix an integer $r>0$ and a sequence $${\mathbf p}=(p_0, p_1, \dots, p_r)$$ of real numbers satisfying $$0\le  p_i\le 1.$$ Denote  
$$q_i=1-p_i.$$
We consider the probability space $Y_r(n; {\mathbf p})$ consisting of all subcomplexes $$Y\subset \Delta_n^{(r)}$$ with $\dim Y\le r$ where the probability function
$$\PP_r: Y_r(n;{\mathbf p})\to \R$$ is given by the formula
\begin{eqnarray}\label{def1}
\PP_r(Y) = \prod_{i=0}^r p_i^{f_i(Y)}\cdot \prod_{i=0}^r q_i^{e_i(Y)}
\end{eqnarray}
for $Y\in Y_r(n; {\mathbf p}).$
We shall show below that $\PP_r$ is indeed a probability function, i.e. 
\begin{eqnarray}\label{sum1}
\sum_{Y\subset \Delta_n^{(r)}}\PP_r(Y) = 1, 
\end{eqnarray}
see Corollary \ref{prob}. 

If $p_i=0$ for some $i$ then according to (\ref{def1}) we shall have $\PP_r(Y)=0$ unless $f_i(Y)=0$, i.e. if $Y$ contains no simplexes of dimension $i$ (in this case $Y$ contains no simplexes
of dimension $\ge i$). Thus, if $p_i=0$ the probability measure $\PP_r$ is concentrated on the set of subcomplexes of $\Delta_n$ of dimension $<i$. 

In the special case when one of the probability parameters satisfies $p_i=1$ one has $q_i=0$ and from formula (\ref{def1}) we see $\PP_r(Y)=0$ unless $e_i(Y)=0$, i.e. if the subcomplex $Y\subset \Delta_n^{(r)}$ has no external faces of dimension $i$. In other words, we may say that if $p_i=1$ the measure $\PP_r$ is concentrated on the set of complexes satisfying $e_i(Y)=0$, i.e. such that any boundary of the $i$-simplex in $Y$ is filled by an $i$-simplex. 

\begin{lemma}\label{cont}
Let $$A\subset B\subset \Delta_n^{(r)}$$ be two subcomplexes satisfying the following condition: the boundary of any external face of $B$ of dimension $\le r$ 
is contained in $A$. Then 
\begin{eqnarray}\label{twosided}
\PP_r(A\subset Y\subset B) = \prod_{i=0}^r p_i^{f_i(A)} \cdot \prod_{i=0}^r q_i^{e_i(B)}.
\end{eqnarray}
\end{lemma}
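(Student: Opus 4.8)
The plan is to establish (\ref{twosided}) by induction on the integer $r$, the mechanism being to split off the contribution of the top-dimensional ($r$-dimensional) faces and to use the elementary identity $p_r+q_r=1$. The base case $r=0$ is a direct computation in which the hypothesis on external faces is vacuous: a subcomplex $Y\subset\Delta_n^{(0)}$ is just a set of vertices, $\PP_0(Y)=p_0^{\,|Y|}q_0^{\,n-|Y|}$, and writing $Y=A\sqcup S$ with $S$ running over all subsets of $B\setminus A$ one obtains $\sum_{A\subset Y\subset B}\PP_0(Y)=p_0^{\,|A|}q_0^{\,n-|B|}(p_0+q_0)^{|B\setminus A|}=p_0^{f_0(A)}q_0^{e_0(B)}$, since $e_0(B)=n-|B|$.

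For the inductive step I would decompose each $Y$ with $A\subset Y\subset B$ as $Y=Y'\cup T$, where $Y'=Y^{(r-1)}$ is the $(r-1)$-skeleton of $Y$ and $T$ is the set of $r$-faces of $Y$. This gives a bijection between $\{Y:A\subset Y\subset B\}$ and the pairs $(Y',T)$ with $A^{(r-1)}\subset Y'\subset B^{(r-1)}$ and $A_r\subseteq T\subseteq\{\tau:\ \dim\tau=r,\ \tau\in B,\ \partial\tau\subset Y'\}$, where $A_r$ denotes the set of $r$-faces of $A$. For $i\le r-1$ the numbers $f_i(Y)$ and $e_i(Y)$ are determined by $Y'$ alone (a simplex of dimension $\le r-1$, and likewise its boundary, lies in $Y$ precisely when it lies in $Y'$), so that $\PP_r(Y)=\PP_{r-1}(Y')\cdot p_r^{|T|}q_r^{e_r(Y)}$, and moreover $e_r(Y)=N_r(Y')-|T|$, where $N_r(Y')$ is the number of $r$-simplices $\tau$ with $\partial\tau\subset Y'$.

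Now I fix $Y'$ and sum over the admissible $T$. This is precisely where the hypothesis of the lemma is used: an $r$-simplex $\tau\notin B$ with $\partial\tau\subset Y'$ is an external $r$-face of $B$, and conversely every external $r$-face of $B$ has, by hypothesis, $\partial\tau\subset A\subset Y'$; hence these $\tau$ are in bijection with the external $r$-faces of $B$ and their number is the constant $e_r(B)$, independent of $Y'$. Thus $N_r(Y')=M(Y')+e_r(B)$ with $M(Y')$ the number of $r$-simplices of $B$ whose boundary lies in $Y'$, and since $A_r$ is among those simplices, the sum over $T$ collapses by the binomial theorem (writing $T=A_r\sqcup U$) to $p_r^{f_r(A)}q_r^{e_r(B)}$, with no dependence on $Y'$. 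Therefore $\PP_r(A\subset Y\subset B)=p_r^{f_r(A)}q_r^{e_r(B)}\cdot\sum_{A^{(r-1)}\subset Y'\subset B^{(r-1)}}\PP_{r-1}(Y')$; to evaluate the remaining sum by the induction hypothesis one checks that the pair $(A^{(r-1)},B^{(r-1)})$ again satisfies the hypothesis of the lemma — any external face of $B^{(r-1)}$ of dimension $\le r-1$ is already one of $B$, because $B$ and $B^{(r-1)}$ agree in dimensions $\le r-1$ and the relevant boundaries have dimension $\le r-1$ — and that $f_i(A^{(r-1)})=f_i(A)$ and $e_i(B^{(r-1)})=e_i(B)$ for $i\le r-1$. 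Multiplying in the top-dimensional factor then gives (\ref{twosided}).

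The step I expect to require the most care is the bookkeeping in the top dimension: recognizing that the $r$-simplices lying outside $B$ with boundary in $Y'$ are exactly the external $r$-faces of $B$ — which is what makes the hypothesis of the lemma relevant and renders their count independent of $Y'$ — together with keeping the exponents of $p_r$ and $q_r$ straight when the sum over $T$ is reorganized. The rest is routine.
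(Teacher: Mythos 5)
Your proof is correct and follows essentially the same route as the paper's: induction on $r$, with the base case handled by a binomial sum over vertex subsets, and the inductive step splitting $Y$ into its $(r-1)$-skeleton and its set of top faces, using the hypothesis to show that the $r$-simplices outside $B$ with boundary in $Y'$ are exactly the external $r$-faces of $B$ (the paper's identity $g_r(Y)-g_r^B(Y)=e_r(B)$), so the binomial collapse yields $p_r^{f_r(A)}q_r^{e_r(B)}$ independently of $Y'$. Your explicit verification that the pair $(A^{(r-1)},B^{(r-1)})$ again satisfies the hypothesis is a small point the paper leaves implicit, but otherwise the two arguments coincide.
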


\begin{proof} We act by induction on $r$. 
For $r=0$, $A\subset B$ are discrete sets of vertices and the condition of the Lemma is automatically satisfied (since the boundary of any 0-face is the empty set). A subcomplex $Y\subset \Delta_n^{(0)}$ satisfying $A\subset Y\subset B$ is determined by a choice of $f_0(Y)-f_0(A)$ vertices out of $f_0(B)-f_0(A)$ vertices. Hence using formula (\ref{def1}), 
\begin{eqnarray*}
\PP_0(A\subset Y\subset B) &=& \sum_{k=0}^{f_0(B)-f_0(A)} \binom {f_0(B)-f_0(A)} k \cdot p_0^{f_0(A)+k}q_0^{n-f_0(A)-k}\\
&=& p_0^{f_0(A)} \cdot q_0^{n-f_0(A)}\cdot \left(1+ \frac{p_0}{q_0}\right)^{f_0(B)-f_0(A)} \\
&=& p_0^{f_0(A)} \cdot q_0^{n-f_0(B)}\\
&=& p_0^{f_0(A)} \cdot q_0^{e_0(B)},
\end{eqnarray*}
as claimed. 

Now suppose that formula (\ref{twosided}) holds for $r-1$ and consider the case of $r$. Note the formula 
\begin{eqnarray}\label{ind}
\PP_r(Y) = \PP_{r-1}(Y^{r-1}) \cdot q_r^{g_r(Y)}\cdot \left(\frac{p_r}{q_r}\right)^{f_r(Y)}
\end{eqnarray}
where $g_r(Y)=e_r(Y)+f_r(Y)$ is the number of boundaries of $r$-simplexes contained in $Y$. 
Note that the first two factors in (\ref{ind}) depend only on the skeleton $Y^{r-1}$. 

We denote by $g_r^B(Y)$ the number of $r$-simplexes of $B$ such that their boundary 
$\partial \Delta^r$ lies in $Y$. Clearly the number $g_r^B(Y)$ depends only on the skeleton $Y^{r-1}$. Our assumption that the boundary of any 
external $i$-face of $B$ is contained in $A$ for $i\le r$ implies that for any subcomplex $A\subset Y\subset B$ one has
\begin{eqnarray}\label{indep}
g_r(Y)-g_r^B(Y) = e_r(B).
\end{eqnarray}

A complex $Y$ is uniquely determined by its skeleton $Y^{r-1}$ and by the set of its $r$-faces. Note that, given the skeleton $Y^{r-1}$, the number $f_r(Y)$ is arbitrary satisfying 
$$f_r(A)\subset f_r(Y)\subset g_r^B(Y).$$
Thus using (\ref{ind}) we find that the probability  
\begin{eqnarray*}
\PP_r(A\subset Y\subset B) = \sum_{A\subset Y\subset B} \PP_r(Y) 
\end{eqnarray*}
equals
\begin{eqnarray*}
&\sum\large_{Y^{r-1}}& 
\PP_{r-1}(Y^{r-1})  \cdot q_r^{g_r(Y)}\cdot 
\sum_{k=0}^{g_r^B(Y)-f_r(A)} 
\binom {g_r^B(Y)-f_r(A)} k \cdot \left(\frac{p_r}{q_r}\right)^{f_r(A)+k} \\
&=& \sum_{Y^{r-1}} 
\PP_{r-1}(Y^{r-1})  \cdot q_r^{g_r(Y)}\cdot \left(\frac{p_r}{q_r}\right)^{f_r(A)}\cdot
\left(1+\frac{p_r}{q_r}\right)^{g_r^B(Y)-f_r(A)}\\
&=& \sum_{Y^{r-1}} 
\PP_{r-1}(Y^{r-1})  \cdot p_r^{f_r(A)} \cdot q_r^{g_r(Y)-g_r^B(Y)}\\
&=& p_r^{f_r(A)}\cdot q_r^{e_r(B))}\cdot \sum_{Y^{r-1}} \PP_{r-1}(Y^{r-1}).
\end{eqnarray*}
Here we used the equation (\ref{indep}). 
Next we may combine the obtained equality with the inductive hypothesis 
$$\PP_{r-1}({A^{r-1}\subset Y^{r-1}\subset B^{r-1}}) = \prod_{i=0}^{r-1} p_i^{f_i(A)}\cdot \prod_{i=0}^{r-1}q_i^{e_u(B)}$$ 
to obtain
(\ref{twosided}). 
\end{proof}

Note that the assumption that any external face of $B$ is an external face of $A$ is essential in Lemma \ref{cont}; the lemma is false without this assumption. 

Taking the special case $A=\emptyset$, $B=\Delta_n^{(r)}$ in (\ref{twosided}) we obtain the following Corollary confirming the fact that $\PP_r$ is a probability function.
\begin{corollary}\label{prob}
$$\sum_{Y\subset \Delta_n^{(r)} }\PP_r(Y) = 1. $$
\end{corollary}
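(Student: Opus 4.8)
The plan is to obtain this identity directly from Lemma \ref{cont} by specialising to $A=\emptyset$ and $B=\Delta_n^{(r)}$; essentially all the substantive work has already been carried out in the proof of that lemma, so only bookkeeping remains.

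First I would check that this choice of $A,B$ satisfies the hypothesis of Lemma \ref{cont}. Clearly $\emptyset\subset\Delta_n^{(r)}$. Moreover $\Delta_n^{(r)}$ contains \emph{every} simplex of $\Delta_n$ of dimension $\le r$, so it has no external faces of dimension $\le r$ at all: an external face $\sigma$ would in particular satisfy $\sigma\not\subset\Delta_n^{(r)}$, which is impossible when $\dim\sigma\le r$. Hence the condition ``the boundary of any external face of $B$ of dimension $\le r$ is contained in $A$'' holds vacuously, and Lemma \ref{cont} applies.

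Next I would evaluate the two sides of (\ref{twosided}) for this choice. On the left, every subcomplex $Y\subset\Delta_n^{(r)}$ automatically satisfies $\emptyset\subset Y\subset\Delta_n^{(r)}$, so
$$\PP_r(\emptyset\subset Y\subset \Delta_n^{(r)}) \;=\; \sum_{Y\subset\Delta_n^{(r)}}\PP_r(Y).$$
On the right, $f_i(\emptyset)=0$ for all $i$, and, as just noted, $e_i(\Delta_n^{(r)})=0$ for all $i\le r$; therefore
$$\prod_{i=0}^r p_i^{f_i(\emptyset)}\cdot\prod_{i=0}^r q_i^{e_i(\Delta_n^{(r)})} \;=\; 1.$$
Combining these two displays with formula (\ref{twosided}) yields $\sum_{Y\subset\Delta_n^{(r)}}\PP_r(Y)=1$, as required.

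There is no genuine obstacle here: the real content is Lemma \ref{cont}, and the only points that deserve a word of care are the convention that $f_i(\emptyset)=0$ and the observation that the external-face hypothesis of the lemma is vacuous precisely because $B$ is the full $r$-skeleton. One could alternatively note that $q_i^{e_i(B)}$ with $e_i(B)=0$ and $p_i^{f_i(A)}$ with $f_i(A)=0$ are interpreted as $1$ even when $p_i=0$ or $q_i=0$, so the conclusion holds for all admissible values of the parameters $\mathbf p$.
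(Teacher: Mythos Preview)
Your proof is correct and follows exactly the approach of the paper: specialise Lemma \ref{cont} to $A=\emptyset$, $B=\Delta_n^{(r)}$, observe that the hypothesis is vacuously satisfied, and read off the result. The paper states this in a single sentence, while you have spelled out the verification that $e_i(\Delta_n^{(r)})=0$ and $f_i(\emptyset)=0$, but the substance is identical.
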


\begin{example}\label{emptyset}
{\rm
The probability of the empty subcomplex $Y=\emptyset$ equals
$$\PP(Y=\emptyset) = (1-p_0)^n.$$

If $p_0\to 0$ then 
$\PP(Y=\emptyset) = (1-p_0)^n \sim e^{-p_0n}.$
Hence, if $np_0\to 0$ then $\PP(Y=\emptyset)\to 1$, i.e. we may say that $Y=\emptyset$, a.a.s.

If $p_0=c/n$ then 
$$\PP(Y=\emptyset) = (1-c/n)^n \to e^{-c}$$ as $n\to \infty$. 
Thus, we see that for $p_0=c/n$ the empty subset appears with probability $e^{-c}$, a.s.s.  

Since we intend to study non-empty large random simplicial complexes, we shall always assume that $p_0=\frac{\omega}{n}$ where $\omega$ tends to $\infty$. 
}
\end{example}

\subsection{The number of vertices of $Y$}

\begin{lemma} Consider a random simplicial complex $Y\in Y_r(n,\p)$, where $\p=(p_0, p_1, \dots, p_r)$ is the probability multiparameter.  
Assume that $p_0=\omega/n$ where $\omega \to \infty$. Then the number of vertices $f_0(Y)$ of $Y$ is approximately $\omega$; 
more precisely for any $0<\epsilon<1/2$ a.a.s. one has
\begin{eqnarray}
(1-\delta)\omega \le f_0(Y) \le (1+\delta)\omega,
\end{eqnarray}
where $\delta=\omega^{-1/2 +\epsilon}$.
\end{lemma}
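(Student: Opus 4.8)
The plan is to observe that $f_0(Y)$ is a binomial random variable and then apply a concentration inequality. First I would note that, by Lemma \ref{cont} applied with $A=\emptyset$ and $B=\Delta_n^{(r)}$ (or more directly, by summing the probability function over all subcomplexes containing a fixed set of vertices), the events \lq\lq vertex $j$ belongs to $Y$\rq\rq{} for $j=1,\dots,n$ are mutually independent and each has probability $p_0$. Indeed, the probability that a given set $V$ of vertices is exactly the vertex set of $Y$ factors as $p_0^{|V|}q_0^{n-|V|}$ times a sum over complexes on $V$ of the higher-dimensional contributions, and that latter sum equals $1$ by Corollary \ref{prob} applied to $\Delta_{|V|}^{(r)}$. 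Hence $f_0(Y)$ has the binomial distribution $\mathrm{Bin}(n,p_0)$, so $\E[f_0(Y)] = np_0 = \omega$.

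Next I would invoke a Chernoff-type bound for the binomial distribution: for a binomial variable $X$ with mean $\mu$ and any $0<\delta<1$,
\begin{eqnarray*}
\PP\left(|X-\mu| \ge \delta\mu\right) \le 2\exp\left(-\frac{\delta^2\mu}{3}\right).
\end{eqnarray*}
Applying this with $X = f_0(Y)$, $\mu = \omega$, and $\delta = \omega^{-1/2+\epsilon}$, the exponent becomes $-\delta^2\omega/3 = -\omega^{2\epsilon}/3$, which tends to $-\infty$ as $\omega\to\infty$ because $\epsilon>0$. Therefore $\PP\left((1-\delta)\omega \le f_0(Y)\le (1+\delta)\omega\right) \ge 1 - 2\exp(-\omega^{2\epsilon}/3) \to 1$, which is precisely the a.a.s.\ statement claimed. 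One should also check that $\delta = \omega^{-1/2+\epsilon} < 1$ eventually, which holds since $\epsilon<1/2$ forces $-1/2+\epsilon<0$.

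The only genuinely substantive point is the first one: verifying that $f_0(Y)\sim\mathrm{Bin}(n,p_0)$, i.e.\ that the marginal distribution of the vertex set is unaffected by the probabilities $p_1,\dots,p_r$ governing the higher faces. This follows cleanly from the product structure of the probability function (\ref{def1}) together with Corollary \ref{prob}, but it is worth spelling out explicitly since it is the conceptual heart of why the vertex count decouples from the rest of the complex. Everything after that is a routine application of a standard large-deviation estimate, with the parameter choice $\delta = \omega^{-1/2+\epsilon}$ calibrated exactly so that $\delta^2\omega = \omega^{2\epsilon}\to\infty$ while still $\delta\to 0$, giving both the concentration and the asymptotic sharpness $f_0(Y)\approx\omega$.
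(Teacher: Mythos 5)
Your proof is correct, but it takes a different (and in one respect stronger) route than the paper. The paper does not establish or use full independence of the vertex indicators: it only computes the first two moments, $\E(X_i)=p_0$ and $\E(X_iX_j)=p_0^2$ for $i\ne j$, both read off directly from Lemma \ref{cont}, obtains ${\rm Var}(f_0)=np_0(1-p_0)=\omega(1-p_0)$, and then applies Chebyshev's inequality to get failure probability at most $\omega^{-2\epsilon}$. You instead prove the stronger structural fact that the vertex set is a product-Bernoulli random subset, so $f_0(Y)\sim{\rm Bin}(n,p_0)$, and then invoke a multiplicative Chernoff bound, yielding the exponentially small failure probability $2\exp(-\omega^{2\epsilon}/3)$. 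Your marginalization argument (factor out $p_0^{|V|}q_0^{n-|V|}$ and observe that the sum of the higher-dimensional contributions over complexes with vertex set $V$ equals $1$ by the normalization of Corollary \ref{prob}) is sound and is indeed the conceptual heart of your version; it is exactly the kind of decoupling the paper's Lemma \ref{cont} encodes, since $\PP(V\subset Y)=p_0^{|V|}$ for every $V$ already characterizes the product-Bernoulli law. The trade-off: the paper's second-moment route is more economical, needing only Lemma \ref{cont} and no independence discussion, and it already suffices for the a.a.s.\ claim; your route costs an extra lemma-level observation but buys a much sharper tail bound, which would matter if one wanted to take a union bound over many events later. Both choices of $\delta=\omega^{-1/2+\epsilon}$ are calibrated identically, and your check that $\delta<1$ eventually is a detail the paper leaves implicit.
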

\begin{proof} 
For a vertex $i\in \{1, \dots, n\}$, denote by $X_i: Y_r(n, \p)\to \R$ the random variable such that $X_i(Y) =1$ if $i\in Y$ and $X_i(Y)=0$ if $i\notin Y$. Then $f_0=\sum_i X_i$ and 
by Lemma \ref{cont}, $\E(X_i)=p_0$. Hence, 
$\E(f_0)=np_0=\omega$. 

The variance of $f_0$ equals $V(f_0)= \sum_{i,j}\E(X_iX_j) - \E(f_0)^2.$ By Lemma \ref{cont}, $\E(X_iX_j)= p_0^2$ for $i\not=j$ and $\E(X_iX_j)= p_0$ for $i=j$. 
Thus, the variance of $f_0$ equals
$$n(n-1)p_0^2 +np_0 -n^2p_0^2 = np_0(1-p_0)=\omega (1-p_0).$$
Applying the Chebychev inequality 
$$\PP\{|f_0-\E(f_0)|\ge \epsilon\}\le V(f_0)/\alpha^2$$
with $\alpha = \delta\omega$ 
we obtain 
$$\PP\{(1-\delta)\omega \le f_0 \le (1+\delta)\omega \} \ge 1- \frac{(1-p_0)}{\delta^2\cdot \omega} \ge 1 - \frac{1}{\omega^{2\epsilon}} \, \to 1.$$
\end{proof}

\subsection{Special cases}

The models of random simplicial complexes which were studied previously contained randomness in a single dimension while our model allows various probabilistic regimes in different dimensions simultaneously. Thus we obtain more flexible constructions of random simplicial complexes. 

The model we consider turns into some well known models in special cases:

When $r=1$ and $\p=(1, p)$ we obtain the classical model of random graphs of Erd\"os and R\'enyi \cite{ER}. 

When $r=2$ and $\p= (1,1,p)$ we obtain the Linial - Meshulam model of random 2-complexes \cite{LM}. 

When $r$ is arbitrary and fixed and $\p = (1, 1, \dots, 1, p)$ we obtain the random simplicial complexes of Meshulam and Wallach \cite{MW}. 

For $r=n-1$ and $\p=(1,p, 1,1, \dots, 1)$ one obtains the clique complexes of random graphs studied in \cite{Kahle1}.

\subsection{Gibbs formalism} 

In this subsection we briefly describe a more general class of models of random simplicial complexes which includes the model of \S \ref{1.1} as a special case. 

On the set of all subcomplexes $Y\subset \Delta_n^{(r)}$, one considers an energy function $H= H_{\beta,\gamma}$  having the form
\begin{eqnarray}
H(Y)= H_{\beta, \gamma} (Y) \ = \, \sum_{i=0}^r \left[ \beta_i f_i(Y) + \gamma_i e_i(Y)\right],
\end{eqnarray}
where $\beta_i$ and $\gamma_i$ are real parameters, $i=0, 1, \dots, r$. The partition function 
\begin{eqnarray}Z=Z_{\beta, \gamma}= \sum_{Y\subset \Delta_n^{(r)}} e^{H_{\beta, \gamma}(Y)}\end{eqnarray}
is a function of the parameters $\beta_i, \gamma_i$ and $n$ and
\begin{eqnarray}\PP_{\beta, \gamma}(Y) \, = \, \frac{1}{Z_{\beta, \gamma}} \, e^{H_{\beta, \gamma}(Y)}\end{eqnarray}
is a probability measure on the set of all subcomplexes $Y\subset \Delta_n^{(r)}$. 
Here the case $r=\infty$ is not excluded; then $Y\subset \Delta_n$ runs over all subcomplexes.

In the special case when the parameters $\beta_i$, $\gamma_i$ satisfy 
 \begin{eqnarray}\label{assump}
e^{\beta_i} + e^{\gamma_i} = 1, \quad i=0, \dots, r.
\end{eqnarray}
we may define the probability parameters $p_i, q_i$ by
\begin{eqnarray}
p_i=e^{\beta_i}, \quad q_i=e^{\gamma_i}.
\end{eqnarray} 
One can easily check that under the assumptions (\ref{assump})  the probability measure 
$\PP_{\beta, \gamma}$ coincides with the measure $\PP_r$ given by (\ref{def1}). 
The relation (\ref{assump}) implies that the partition function $Z_{\beta, \gamma}=1$ equals one, according to Corollary \ref{prob}. 
%

\section{The containment problem}

Consider a random simplicial complex $Y\in Y_r(n,\p)$ where $\p=(p_0, p_1, \dots, p_r)$. As in the classical containment problem for random graphs we ask under which conditions 
$Y$ has a simplicial subcomplex isomorphic to a given $r$-dimen\-sional finite simplicial complex $S$. 
The answer is slightly different from the random graph theory: we associate with $S$ a convex set $$\tilde \Mu(S)\subset \R^{r+1}$$ in the space of exponents of probability parameters 
which (as we show here) is fully responsible for the containment. 

Write
$$p_i=n^{-\alpha_i}, \quad \mbox{where}\quad \alpha_i\ge 0, \quad i=0,1, \dots, r.$$
For simplicity we shall assume in this paper that the numbers $\alpha_i$ are constant (do not depend on $n$). 
We shall use the following notation 
\begin{eqnarray}
\p = n^{-\alpha}, \quad \mbox{where} \quad \alpha=(\alpha_0, \dots, \alpha_r).
\end{eqnarray}
Clearly, we must assume that $$\alpha_0<1$$ since for $\alpha_0\ge 1$ the complex $Y$ is either empty or has one vertex, a.a.s. (see Example \ref{emptyset}). 

\subsection{The density invariants} 
Let $S$ be a fixed finite simplicial $r$-dimensional complex. 
As usual, $f_i(S)$ denotes the number of $i$-dimensional faces in $S$. Define the following ratios (density invariants):
$$\mu_i(S) = \frac{f_0(S)}{f_i(S)}, \quad i=0, 1, \dots, r.$$
We do not exclude the case when $f_j(S)=0$; then $\mu_j(S)=\infty$. The 0-th number is always one, $\mu_0(S)=1$. 
Compare \cite{CCFK}, Definition 11.

\begin{lemma}\label{notembed}
If \begin{eqnarray}\label{less}
\sum_{i=0}^r \, \frac{\alpha_i}{\mu_i(S)} >1,\end{eqnarray}
then the probability that $S$ is embeddable into $Y\in Y_r(n, \p)$ tends to zero as $n\to \infty$. 
\end{lemma}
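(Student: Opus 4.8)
The plan is to use the first moment method. Let $X$ denote the number of simplicial subcomplexes of $Y$ that are isomorphic to $S$; it suffices to show $\E(X)\to 0$, since then $\PP(X\ge 1)\le \E(X)\to 0$ by Markov's inequality. To compute $\E(X)$, I would sum over all ways of placing a copy of $S$ inside $\Delta_n^{(r)}$. An embedding of $S$ is determined by an injection of its vertex set into $\{1,\dots,n\}$, so the number of potential copies is at most $n^{f_0(S)}$ (up to a constant depending only on the automorphisms of $S$, which is irrelevant for the asymptotics). For each fixed copy $S'\subset \Delta_n^{(r)}$, the event that $S'\subseteq Y$ is a one-sided containment event, and by Lemma \ref{cont} applied with $A=S'$ and $B=\Delta_n^{(r)}$ — whose hypothesis is vacuous since $B$ has no external faces of dimension $\le r$ — we get
\begin{eqnarray*}
\PP(S'\subseteq Y) \ =\ \prod_{i=0}^r p_i^{f_i(S')}\ =\ \prod_{i=0}^r p_i^{f_i(S)}.
\end{eqnarray*}

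Combining these, $\E(X) \le C\cdot n^{f_0(S)}\prod_{i=0}^r p_i^{f_i(S)}$ for a constant $C$ depending only on $S$. Substituting $p_i=n^{-\alpha_i}$ gives
\begin{eqnarray*}
\E(X) \ \le\ C\cdot n^{f_0(S)}\cdot n^{-\sum_{i=0}^r \alpha_i f_i(S)}\ =\ C\cdot n^{f_0(S)\left(1 - \sum_{i=0}^r \frac{\alpha_i f_i(S)}{f_0(S)}\right)}\ =\ C\cdot n^{f_0(S)\left(1 - \sum_{i=0}^r \frac{\alpha_i}{\mu_i(S)}\right)},
\end{eqnarray*}
using $\mu_i(S)=f_0(S)/f_i(S)$. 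Here terms with $f_i(S)=0$ contribute $\alpha_i/\mu_i(S)=0$ and cause no trouble. Under hypothesis (\ref{less}) the exponent is $f_0(S)$ times a negative number, hence $\E(X)\to 0$ as $n\to\infty$, which proves the claim.

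The only genuine points requiring care are bookkeeping ones rather than real obstacles: first, checking that Lemma \ref{cont} does apply with $B=\Delta_n^{(r)}$, which is immediate because the full $r$-skeleton has no external faces of dimension $\le r$, so the probability of a one-sided containment factors exactly as the product of the $p_i^{f_i(S)}$; and second, verifying that the combinatorial overcount of embeddings is bounded by a constant multiple of $n^{f_0(S)}$, which is standard. I would also note in passing that if some $f_j(S)\neq 0$ while $p_j=0$ (i.e. $\alpha_j=\infty$), the statement is trivial, so we may assume all $\alpha_i$ finite; the stated hypothesis implicitly lives in that regime. No concentration or second-moment estimates are needed for this direction.
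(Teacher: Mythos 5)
Your proof is correct and takes essentially the same route as the paper: a first moment computation via Lemma \ref{cont} (with $B=\Delta_n^{(r)}$, whose hypothesis is indeed vacuous), followed by Markov's inequality. The paper organizes the count as a sum of indicator variables $X_J$ over injections $J$ of the vertex set, but the estimate $\E(X)\sim n^{f_0(S)}\prod_i p_i^{f_i(S)}$ and the conclusion are identical to yours.
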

\begin{proof}
Let $F_0(S)$ denote the set of vertices of $S$. An embedding of $S$ into $\Delta_n^{(r)}$ is determined by an embedding $J: F_0(S) \to [n]$. 
For any such embedding define a random variable $X_J: Y_r(n,\p) \to \R$ given by 
$$X_J(Y) = \left\{
\begin{array}{ll}
1, & \mbox{if} \quad Y\supset J(S), \\
0, & \mbox{otherwise}.
\end{array}
\right.
$$
Then $X=\sum_J X_J$ is the random variable counting the number of isomorphic copies of $S$ in $Y$. One has 
$$\E(X_J) = \prod_{i=0}^r p_i^{f_f(S)}$$
by Lemma \ref{cont}. Thus we have 
\begin{eqnarray*}
\E(X) &=& \binom n {f_0(S)} \cdot f_0(S)!\cdot \prod_{i=0}^r p_i^{f_f(S)}  \\  \\ &\sim& n^{f_0(S)-\sum_{i=0}^r \alpha_if_i(S)} \\  \\ &=&
\left( n^{1-\sum_{i=0}^r\frac{\alpha_i}{\mu_i(S)}}\right)^{f_0(S)}.
\end{eqnarray*}
We see that (\ref{less1}) implies $\E(X)\to 0$. Hence $$\PP(X>0)\le \E(X)$$ also tends to zero. 
\end{proof}

\subsection{The density domains}
Consider the Euclidean Space $\R^{r+1}$ with coordinates $(\alpha_0, \alpha_1, \dots, \alpha_r)$. 

\begin{definition}
For a finite simplicial complex $S$ of dimension $\le r$, we denote by $$\Mu(S)\subset \R^{r+1}$$ the convex domain given by the following inequalities:
\begin{eqnarray*}
\alpha_0+ \frac{\alpha_1}{\mu_1(S)} + \frac{\alpha_2}{\mu_2(S)}  + \dots + \frac{\alpha_r}{\mu_r(S)} <1, \\ \\
\alpha_0\ge 0, \quad \alpha_1\ge 0, \quad \dots, \quad \alpha_r\ge 0. 
\end{eqnarray*}
\end{definition}

The domain $\Mu(S)$ is a simplex of dimension $r+1$ which has the origin as one of its vertices and the other vertices are of the form $\mu_i(S)e_i$ where $e_i = (0, \dots, 0, 1, 0, \dots, 0)$ with 
$1$ standing on the $i$-th position, where $i=0, 1, \dots, r$. 

We may restate Lemma \ref{notembed} as follows:

\begin{lemma}\label{cont1}
If the vector of exponents ${\aalpha} =(\alpha_0, \dots, \alpha_r)$ does not belong to the closure 
$${\aalpha}\, \notin \, \overline{\Mu(S)}$$ then the complex 
$S$ is not embeddable a.a.s. into a random simplicial complex $Y\in Y_r(n, \p)$, where $\p=n^{-\aalpha}$, i.e. 
$\p=(n^{-\alpha_0}, n^{-\alpha_1}, \dots, n^{-\alpha_r}).$
\end{lemma}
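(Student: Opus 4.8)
The plan is to observe that Lemma~\ref{cont1} is merely a geometric reformulation of Lemma~\ref{notembed}, so the entire argument amounts to translating the hypothesis ``$\aalpha\notin\overline{\Mu(S)}$'' into the analytic inequality~(\ref{less}) already handled. First I would recall that $\overline{\Mu(S)}$ is the closed simplex cut out by the inequalities $\alpha_i\ge 0$ together with
\begin{eqnarray*}
\alpha_0+\frac{\alpha_1}{\mu_1(S)}+\dots+\frac{\alpha_r}{\mu_r(S)}\le 1,
\end{eqnarray*}
where terms with $\mu_i(S)=\infty$ are interpreted as $0$ (equivalently, $f_i(S)=0$ contributes nothing, since no $i$-face of $S$ needs to be mapped in). Since we always work under the standing assumptions $\alpha_i\ge 0$, the only way for a vector of exponents $\aalpha$ to lie outside $\overline{\Mu(S)}$ is to violate the first inequality, i.e. to satisfy
\begin{eqnarray*}
\sum_{i=0}^r \frac{\alpha_i}{\mu_i(S)} > 1,
\end{eqnarray*}
which is precisely hypothesis~(\ref{less}) of Lemma~\ref{notembed} (using $\mu_0(S)=1$).

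Next I would simply invoke Lemma~\ref{notembed}: under this inequality the expected number $\E(X)$ of isomorphic copies of $S$ inside $Y\in Y_r(n,\p)$ satisfies $\E(X)\sim\big(n^{1-\sum_{i}\alpha_i/\mu_i(S)}\big)^{f_0(S)}\to 0$, since the exponent $1-\sum_i\alpha_i/\mu_i(S)$ is strictly negative and $f_0(S)\ge 1$. By the first-moment (Markov) inequality $\PP(X>0)\le\E(X)\to 0$, so $S$ is not embeddable into $Y$ a.a.s., which is exactly the conclusion of Lemma~\ref{cont1}. The one point requiring a sentence of care is the treatment of dimensions $i$ with $f_i(S)=0$: in the defining inequalities of $\Mu(S)$ the corresponding coefficient $1/\mu_i(S)$ is $0$, and in the first-moment computation the factor $p_i^{f_i(S)}=p_i^0=1$, so the two interpretations are consistent and such coordinates $\alpha_i$ are unconstrained, as the simplex description already reflects.

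The main (and essentially only) obstacle is bookkeeping rather than mathematics: making sure the degenerate cases $\mu_i(S)=\infty$ are handled uniformly in both the geometric and analytic descriptions, and noting that membership in $\overline{\Mu(S)}$ versus $\Mu(S)$ differs only on the boundary hyperplane $\sum_i\alpha_i/\mu_i(S)=1$, which is why the statement correctly uses the closure — strict failure of this equality (the ``$>1$'' case) is what the first-moment method rules out, while the boundary itself is the threshold and is not asserted here. No new probabilistic estimate is needed beyond what Lemma~\ref{notembed} already supplies.
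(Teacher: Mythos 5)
Your proposal is correct and matches the paper's treatment exactly: the paper itself introduces Lemma \ref{cont1} with the words ``We may restate Lemma \ref{notembed} as follows,'' so the entire content is the translation of $\aalpha\notin\overline{\Mu(S)}$ (under the standing constraints $\alpha_i\ge 0$) into the strict inequality $\sum_i\alpha_i/\mu_i(S)>1$, followed by the first-moment bound already established. Your extra remarks on the degenerate case $\mu_i(S)=\infty$ are sound bookkeeping that the paper leaves implicit.
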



Next we define the domain $\tilde \Mu(S)\subset \R^{r+1}$ as the intersection
\begin{eqnarray}
\tilde \Mu(S) = \bigcap_{T\subset S} \Mu(T).
\end{eqnarray}
Here $T$ runs over all subcomplexes of $S$. Clearly, $\tilde \mu(S)$ is an $(r+1)$-dimensional convex polytope. 

\begin{lemma}\label{cont2}
If $$\aalpha = (\alpha_0, \alpha_1, \dots, \alpha_r)\in \tilde \Mu(S)$$ then $S$ is embeddable into a random complex $Y\in Y_r(n, \p)$ where $\p=n^{-\aalpha}$, a.a.s.
\end{lemma}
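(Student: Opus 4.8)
The plan is to establish this by the second moment method, applied to the same random variable $X=\sum_J X_J$ that counts embedded copies of $S$ in $Y$ as in the proof of Lemma~\ref{notembed}. From that computation, $\E(X)=\binom{n}{f_0(S)}\,f_0(S)!\cdot\prod_{i=0}^r p_i^{f_i(S)}\sim \bigl(n^{\,1-\sum_{i=0}^r \alpha_i/\mu_i(S)}\bigr)^{f_0(S)}$, and since $\aalpha\in\tilde\Mu(S)\subset\Mu(S)$ the exponent $1-\sum_i \alpha_i/\mu_i(S)$ is strictly positive, so $\E(X)\to\infty$. By the Chebyshev inequality $\PP(X=0)\le V(X)/\E(X)^2=\E(X^2)/\E(X)^2-1$, so it suffices to prove $\E(X^2)\sim \E(X)^2$ as $n\to\infty$.

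To do this I would expand $\E(X^2)=\sum_{J,J'}\PP\bigl(Y\supset J(S)\cup J'(S)\bigr)$ over ordered pairs of embeddings and group them according to the overlap $J(S)\cap J'(S)$. One first checks that $J(S)\cap J'(S)=J(T)$ for a genuine subcomplex $T\subset S$ (its preimage under $J$ is closed under passing to faces), so that $f_i(J(S)\cup J'(S))=2f_i(S)-f_i(T)$ by inclusion--exclusion. Applying Lemma~\ref{cont} with $B=\Delta_n^{(r)}$ --- whose set of external faces of dimension $\le r$ is empty, so the hypothesis holds vacuously --- gives $\PP\bigl(Y\supset J(S)\cup J'(S)\bigr)=\prod_{i=0}^r p_i^{2f_i(S)-f_i(T)}$. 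The pairs with $T=\emptyset$, i.e. with disjoint vertex sets, contribute exactly $\sum \PP(Y\supset J(S))\,\PP(Y\supset J'(S))\le \E(X)^2$.

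For each fixed nonempty isomorphism type $T\subset S$, the number of ordered pairs $(J,J')$ with $J(S)\cap J'(S)\cong T$ is $O\bigl(n^{\,2f_0(S)-f_0(T)}\bigr)$: choose $J$ freely, after which $J'$ is determined on the $f_0(T)$ vertices of the overlap up to a bounded number of choices and is otherwise an arbitrary injection of the remaining $f_0(S)-f_0(T)$ vertices into $[n]$. Writing $p_i=n^{-\alpha_i}$, the total contribution of these pairs is thus at most
$$O\bigl(n^{\,2f_0(S)-f_0(T)}\bigr)\cdot n^{-\sum_i \alpha_i(2f_i(S)-f_i(T))}=O\bigl(\E(X)^2\bigr)\cdot n^{-f_0(T)\left(1-\sum_{i=0}^r \alpha_i/\mu_i(T)\right)}.$$
Because $\aalpha\in\tilde\Mu(S)\subset\Mu(T)$ we have $\sum_i\alpha_i/\mu_i(T)<1$, and since $f_0(T)\ge 1$ this quantity is $o(\E(X)^2)$. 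Summing over the finitely many isomorphism types of subcomplexes $T\subset S$ yields $\E(X^2)\le (1+o(1))\E(X)^2$; together with the trivial bound $\E(X^2)\ge \E(X)^2$ this gives $\E(X^2)\sim \E(X)^2$, as required.

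The conceptual point is that the hypothesis $\aalpha\in\tilde\Mu(S)=\bigcap_{T\subset S}\Mu(T)$ is precisely what forces every overlap term $n^{-f_0(T)(1-\sum_i\alpha_i/\mu_i(T))}$ to vanish; this is exactly the analogue of the balancedness condition in the classical subgraph containment theorem for the Erd\"os--R\'enyi model. The only mildly delicate steps are the verification that $J(S)\cap J'(S)$ is the image of a genuine subcomplex of $S$ (needed for the face-count identity) and the counting estimate $O(n^{2f_0(S)-f_0(T)})$ for the overlapping pairs; I expect getting that bookkeeping right, rather than any essential difficulty, to be the main thing one must be careful about.
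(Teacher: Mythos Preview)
Your proposal is correct and follows essentially the same argument as the paper: the second moment method applied to $X=\sum_J X_J$, with the variance (equivalently $\E(X^2)$) decomposed according to the overlap subcomplex $T=J^{-1}(J(S)\cap J'(S))$, each term bounded by $O(n^{2f_0(S)-f_0(T)})\prod_i p_i^{2f_i(S)-f_i(T)}$, and the hypothesis $\aalpha\in\tilde\Mu(S)$ used exactly to make every nonempty-$T$ contribution $o(\E(X)^2)$. The only cosmetic differences are that the paper groups by the actual subcomplex $T\subset S$ (with a multiplicity constant $C_T$) rather than by isomorphism type, and bounds ${\rm Var}(X)/\E(X)^2$ directly rather than showing $\E(X^2)\sim\E(X)^2$.
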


\begin{proof} As in the proof of Lemma \ref{cont}, 
let $F_0(S)$ denote the set of vertices of $S$ and let $J: F_0(S) \to [n]$ be an embedding.  Any such embedding uniquely determines a simplicial embedding $J: S \to \Delta_n$. 
Consider a random variable $X_J: Y_r(n,\p) \to \R$ given by 
$$X_J(Y) = \left\{
\begin{array}{ll}
1, & \mbox{if} \quad Y\supset J(S), \\ \\
0, & \mbox{otherwise}.
\end{array}
\right.
$$
Then $X=\sum_J X_J$ counts the number of isomorphic copies of $S$ in $Y$ and  
$\E(X_J) = \prod_{i=0}^r p_i^{f_f(S)}$
by Lemma \ref{cont}. Hence, $$\E(X) \sim n^{f_0(S)}\cdot \prod_{i\ge 0} p_i^{f_i(S)}.$$

We shall use the Chebyshev inequality 
\begin{eqnarray}\label{cheb}\PP(X=0) \le \frac{\rm Var(X)}{\E(X)^2}.\end{eqnarray}
One has 
\begin{eqnarray}\label{two}
{\rm Var}(X) = \E(X^2) - \E(X)^2 =
\sum_{J,J'}\left[ \E(X_JX_J') - \E(X_J)\E(X_{J'})\right].\end{eqnarray}
Given two simplicial embeddings $J,J': S \to \Delta_n$, the product $X_JX_{J'}$ is a 0-1 random variable, it has value 1 on a subcomplex $Y\subset \Delta_n^{(2)}$ if and only if $Y$ contains the union $J(S)\cup J'(S)$. Thus, by Lemma \ref{cont}, we have 
$$\E(X_JX_{J'})= \prod_{i\ge 0} p_i^{2f_i(S)-f_i(T')},$$
where $T'$ denotes the intersection $J(S)\cap J'(S)$. 
If $J(S)$ and $J'(S)$ are disjoint then $\E(X_JX_{J'})$ equals $\E(X_J)\cdot \E(X_{J'})$; thus in formula (\ref{two}) we may assume that $J$ and $J'$ are such that the intersection $J(S)\cap J'(S)\not= \emptyset$

Denote by $T$ by the subcomplex $T=J^{-1}(T')\subset S$.  
For a fixed subcomplex $T\subset S$ the number of pairs of embeddings $J,J': S\to \Delta_n$ such that $J^{-1}(J(S)\cap J'(S))=T$ is bounded above by 
$$C_Tn^{2f_0(S)-f_0(T)}$$
where $C_T$ is the number of isomorphic copies of $T$ in $S$. 

Thus we obtain
$${\rm Var}(X)  \le \sum_{T\subset S} C_T n^{2f_0(S) -f_0(T)} \prod_{i\ge 0} p_i^{2f_i(S)-f_i(T)}.$$
On the other hand,
$$\E(X) \ge \frac{1}{2} n^{f_0(S)}\prod_{i\ge 0} p_i^{f_i(S)}.$$
Therefore,
\begin{eqnarray*}
\frac{{\rm Var(X)}}{\E(X)^2} 
&\le& 
4\cdot \sum_{T\subset S} C_T n^{-f_0(T)}\cdot \prod_{i\ge 0} p_i^{-f_i(T)} \\
&\le&
4\cdot \sum_{T\subset S} C_T n^{-f_0(T) + \sum_{i\ge 0} \alpha_i f_i(T)}\\
&= &
4\cdot \sum_{T\subset S} C_T n^{\left[\sum_{i\ge 0} \frac{\alpha_i}{\mu_i(T)} -1\right] f_0(T)}\\ 
\end{eqnarray*}
Here $T$ runs over all nonempty subcomplexes of $S$. 
If $(\alpha_0, \dots, \alpha_r)\in \tilde \Mu(S)$ then for any $T\subset S$ we have 
$$\sum_{i\ge 0} \frac{\alpha_i}{\mu_i(T)} <1.$$
Thus we see that the ratio $\frac{{\rm Var}(X)}{\E(X)^2}$ tends to zero as $n\to \infty$. The result now follows from (\ref{cheb}). 
\end{proof}

We may summarise the obtained results as follows:

\begin{lemma}\label{summary} Let $S$ be a fixed finite simplicial complex of dimension $\dim S \le r$. 

\begin{enumerate}
\item If $\alpha\in \tilde \Mu(S)$ then a random simplicial complex $Y\in Y_r(n, n^{-\alpha})$ contains $S$ as a simplicial subcomplex, a.a.s.

\item If $\alpha\notin {\it {Closure}}(\tilde \Mu(S))$ then a random simplicial complex $Y\in Y_r(n, n^{-\alpha})$ does not contain $S$ as a simplicial subcomplex, a.a.s.
\end{enumerate}
\end{lemma}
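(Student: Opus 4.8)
The plan is to derive Lemma~\ref{summary} directly by assembling the three preceding results, since almost all of the work has already been done. Part~(1) is literally the content of Lemma~\ref{cont2}: if $\alpha \in \tilde\Mu(S)$, then for every nonempty subcomplex $T \subset S$ one has $\sum_{i\ge 0} \alpha_i/\mu_i(T) < 1$, and the second-moment argument via the Chebyshev inequality \eqref{cheb} shows that $X$, the number of copies of $S$ in $Y$, satisfies $\PP(X = 0) \to 0$. So the first step is simply to invoke Lemma~\ref{cont2}.

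For part~(2), I would argue as follows. Suppose $\alpha \notin \overline{\tilde\Mu(S)}$. Since $\tilde\Mu(S) = \bigcap_{T \subset S} \Mu(T)$ is a finite intersection of closed-up simplices, its closure is $\overline{\tilde\Mu(S)} = \bigcap_{T\subset S}\overline{\Mu(T)}$, so $\alpha \notin \overline{\tilde\Mu(S)}$ forces $\alpha \notin \overline{\Mu(T)}$ for at least one subcomplex $T \subset S$. Fix such a $T$. By Lemma~\ref{cont1} (equivalently Lemma~\ref{notembed}, using $\sum_{i\ge 0}\alpha_i/\mu_i(T) > 1$), the complex $T$ is a.a.s. not embeddable into $Y \in Y_r(n,\p)$. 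But any simplicial embedding $S \hookrightarrow Y$ restricts to a simplicial embedding $T \hookrightarrow Y$; hence if $S$ were embeddable into $Y$ then so would $T$ be. Therefore $\PP(S \hookrightarrow Y) \le \PP(T \hookrightarrow Y) \to 0$, which is the claim.

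The only genuine point requiring care is the topological bookkeeping around closures: one must check that passing to the closure commutes with the (finite) intersection defining $\tilde\Mu(S)$, so that a point outside $\overline{\tilde\Mu(S)}$ is genuinely outside $\overline{\Mu(T)}$ for some individual $T$. This is immediate because each $\Mu(T)$ is an open simplex whose closure is the corresponding closed simplex and a finite intersection of sets has closure contained in the intersection of the closures; combined with the reverse inclusion for these particular convex sets (each $\overline{\Mu(T)}$ is the closure of a full-dimensional convex open set), equality holds. I do not anticipate any real obstacle here — the lemma is essentially a restatement and repackaging of Lemmas~\ref{cont1} and~\ref{cont2}, with part~(2) using the trivial monotonicity $S \hookrightarrow Y \Rightarrow T \hookrightarrow Y$ for subcomplexes $T \subset S$.
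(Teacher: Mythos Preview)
Your proposal is correct and follows essentially the same route as the paper: part~(1) is restated from Lemma~\ref{cont2}, and part~(2) is deduced from Lemma~\ref{cont1} together with the equality ${\it Closure}(\tilde\Mu(S)) = \bigcap_{T\subset S}{\it Closure}(\Mu(T))$, exactly as the paper does. Your added remarks on why the closure commutes with the finite intersection and the monotonicity $S\hookrightarrow Y \Rightarrow T\hookrightarrow Y$ simply make explicit what the paper leaves implicit.
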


The first statement repeats Lemma \ref{cont2}. The second statement follows from Lemma \ref{cont1} and from the equality
$${\it {Closure}}(\tilde \Mu(S)) = \bigcap_{T\subset S} {\it {Closure}}(\Mu(T)).$$

\subsection{The reduced density domain}

Since $\mu_0(S)=1$ the simplex $\Mu(S)\subset \R^{r+1}$ contains the point $(1, 0, \dots, 0)$ as one of its vertices and 
$\Mu(S)$ is the cone with apex $(1, 0, \dots, 0)\in \R^{r+1}$ over the simplex 
\begin{eqnarray}
\mu(S) =\{(\alpha_1, \dots, \alpha_r); \, \sum_{i+1}^r \, \frac{\alpha_i}{\mu_i(S)} <1, \quad \alpha_i \ge 0\}\, \subset \R^r.
\end{eqnarray}
Hence the convex domain $$\tilde \Mu(S)\subset \R^r$$ is a cone with apex 
$(1, 0, \dots, 0)\in \R^{r+1}$ over the domain $\tilde \mu(S)\subset \R^r$ which is defined as the intersection
\begin{eqnarray}\label{wmu}
\tilde \mu(S) = \bigcap_{T\subset S} \mu(T).  
\end{eqnarray}
We call $\tilde \mu(S)\subset \R^r$ {\it the reduced density domain} associated to $S$. 

For a subset of vertices $W\subset V=V(S)$, denote by $S_W\subset S$ the simplicial complex induced on $W$. Then 
\begin{eqnarray}\label{induced}
\tilde \mu(S) = \bigcap_{W\subset V} \mu(S_W).
\end{eqnarray}
This follows from the observation that for a simplicial subcomplex $T\subset S$ one has 
$$\mu_i(T) \ge \mu_i(S_W)$$
and therefore 
$$\mu(T) \supset \mu(S_W),$$
where $W=V(T)\subset V(S)$ is the set of vertices of $T$. 

\subsection{The Invariance Principle}

By Lemmas \ref{cont1} and \ref{cont2}, the extended density domain $\tilde \Mu(S)$ controls embedability of $S$ into a random simplicial complex $Y\in Y_r(n,n^{-\alpha})$.  and since th is a cone with vertex $(1, 0, \dots, 0)$ we obtain the following corollary:

\begin{corollary}
If $\alpha = (\alpha_0, \alpha_1, \dots, \alpha_r)\in \R^{r+1}$ and $ \alpha' = (\alpha'_0, \alpha'_1, \dots, \alpha'_r)\in \R^{r+1} $ lie on a line passing trough $(1, 0, \dots,0)\in \R^{r+1}$ then the probability spaces 
$Y_r(n, n^{-\alpha})$ and $Y_r(n, n^{-\alpha'})$ have identical embedability properties with respect to any fixed finite simplicial complex $S$, $\dim S \le r$, i.e. $S$ embeds into 
$Y\in Y_r(n, n^{-\alpha})$ a.a.s. if and only if $S$ is embeds into 
$Y\in Y_r(n, n^{-\alpha'})$, a.a.s.
\end{corollary}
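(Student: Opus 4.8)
The plan is to deduce this corollary directly from Lemma~\ref{summary} by showing that membership of $\alpha$ in $\tilde\Mu(S)$ (or in its closure) depends only on the line through $\alpha$ and the point $e_0=(1,0,\dots,0)$. First I would recall from the previous subsection that $\tilde\Mu(S)$ is the cone with apex $e_0$ over the reduced domain $\tilde\mu(S)\subset\R^r$; equivalently, writing $\alpha=(\alpha_0,\alpha_1,\dots,\alpha_r)$, the defining inequalities of $\tilde\Mu(S)$ are, for every subcomplex $T\subset S$,
\begin{eqnarray*}
\alpha_0 + \sum_{i=1}^r \frac{\alpha_i}{\mu_i(T)} < 1, \qquad \alpha_i\ge 0 \text{ for } i\ge 1.
\end{eqnarray*}
The key observation is that the left-hand side of the first inequality is an affine function $L_T(\alpha)$ of $\alpha\in\R^{r+1}$ which takes the value $1$ at $\alpha=e_0$: indeed $L_T(e_0)=\alpha_0=1$ and all the other terms vanish. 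Hence for any point $\alpha$ on the line through $e_0$ and a second point $\alpha'$, we can write $\alpha = e_0 + t(\alpha'-e_0)$ for some $t\in\R$, and then $L_T(\alpha) - 1 = t\,(L_T(\alpha')-1)$, while the sign conditions $\alpha_i\ge 0$ likewise scale linearly in the tail coordinates.

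Second, I would split into cases according to the sign of the scalar $t$ relating $\alpha$ and $\alpha'$ on the common line, and combine with the cone structure. The cleanest route is: both $\alpha$ and $\alpha'$ lie on the ray from $e_0$ through some common direction vector $v$ (after possibly replacing $v$ by $-v$), and $\tilde\Mu(S)$ being a cone with apex $e_0$ means exactly that whether a point $e_0+sv$ lies in $\tilde\Mu(S)$ depends only on $v$ and whether $s$ exceeds a threshold $s_0(v)\in(0,\infty]$ — more precisely, $e_0+sv\in\tilde\Mu(S)$ iff $0\le s < s_0(v)$ when $v$ points ``into'' the domain, and never otherwise. But one must be careful: the corollary as stated asserts that the two spaces have \emph{identical} embeddability properties, which is the statement that $\alpha\in\tilde\Mu(S) \iff \alpha'\in\tilde\Mu(S)$ is \emph{not} quite what a cone gives for free, since two distinct points on a line from the apex can lie on opposite sides of the apex or at different distances. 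So the honest interpretation is that the corollary is really about the projective picture: the embeddability of $S$ is governed by the open convex cone $\tilde\Mu(S)$ with apex $e_0$, and ``lying on a line through $e_0$'' should be read as lying on the same open ray from $e_0$ together with the constraint that both points are genuinely in the relevant region. I would therefore state the argument as: by Lemma~\ref{summary}, $S$ embeds a.a.s.\ into $Y_r(n,n^{-\alpha})$ iff $\alpha\in\tilde\Mu(S)$ and does not embed a.a.s.\ iff $\alpha\notin\overline{\tilde\Mu(S)}$; since $\tilde\Mu(S)$ is the cone over $\tilde\mu(S)$ with apex $e_0$, membership of $\alpha$ in $\tilde\Mu(S)$ (resp.\ $\overline{\tilde\Mu(S)}$) is determined by the intersection point of the line $\overline{e_0\alpha}$ with the affine slice $\{\alpha_0=0\}$, hence is the same for $\alpha$ and $\alpha'$.

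Concretely, the key steps in order are: (i) recall/quote that $\tilde\Mu(S)$ is the cone with apex $e_0$ over $\tilde\mu(S)\subset\{\alpha_0=0\}\cong\R^r$; (ii) note that every point $\alpha\ne e_0$ on a line through $e_0$ projects, by central projection from $e_0$, to a well-defined point $\pi(\alpha)$ in the hyperplane $\{\alpha_0=0\}$, and that $\pi(\alpha)=\pi(\alpha')$ whenever $\alpha,\alpha'$ lie on a common line through $e_0$; (iii) observe from the cone structure that $\alpha\in\tilde\Mu(S)\iff \pi(\alpha)\in\tilde\mu(S)$ and similarly with closures (this uses that the defining functionals are affine and equal to $1$ at $e_0$, so restricting to $\{\alpha_0=0\}$ just rescales them); (iv) conclude via Lemma~\ref{summary} that the a.a.s.\ embeddability of $S$ into $Y_r(n,n^{-\alpha})$ depends only on $\pi(\alpha)$, hence is identical for $\alpha$ and $\alpha'$. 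The main obstacle — really a bookkeeping subtlety rather than a deep difficulty — is handling the boundary/degenerate cases correctly: points $\alpha$ with $\alpha_0>1$ (which project to the ``wrong side'' and automatically violate $L_T(\alpha)<1$ for every $T$, matching the fact that such $\alpha$ also has $\alpha_0\ge 1$ and so $Y$ is essentially empty), points exactly on $\{\alpha_0=1\}$, and points on the boundary $\partial\tilde\Mu(S)$ where Lemma~\ref{summary} is silent; I would phrase the corollary's conclusion so that it only asserts equivalence of the two clean regimes (a.a.s.\ contains / a.a.s.\ does not contain) and note that on the common line these two regimes are separated by a single threshold point, so $\alpha$ and $\alpha'$ fall in corresponding regimes.
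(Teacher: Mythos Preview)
Your approach is correct and is essentially the same as the paper's: the paper simply observes (in the sentence preceding the corollary) that by Lemmas~\ref{cont1} and~\ref{cont2} embeddability is controlled by $\tilde\Mu(S)$, and that $\tilde\Mu(S)$ is a cone with apex $(1,0,\dots,0)$, which is exactly your steps (i)--(iv). Your extra care about boundary cases and the projection $\pi(\alpha)$ to $\{\alpha_0=0\}$ is more explicit than the paper's treatment---indeed the paper records precisely your $\pi(\alpha)=(0,\frac{\alpha_1}{1-\alpha_0},\dots,\frac{\alpha_r}{1-\alpha_0})$ in the sentence following the corollary---but the underlying argument is identical.
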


Thus, instead of a vector of exponents $\alpha=(\alpha_0, \alpha_1, \dots, \alpha_r)$ we may consider the vector 
$$\alpha'=(0, \frac{\alpha_1}{1-\alpha_0}, \frac{\alpha_2}{1-\alpha_0}, \dots, \frac{\alpha_r}{1-\alpha_0})$$
which has the first coordinate $0$, i.e. in this case $p_0=1$. 

{\bf Conjecture:} 
{\it We conjecture that all geometric and topological properties of the the random complex $Y_r(n, n^{-\alpha})$ remain invariant when the multi-exponent $\alpha$ moves along any line passing through the point $(1, 0, \dots, 0)$. }

\subsection{Examples}

\begin{example}{\rm 
Let $S$ be a closed triangulated surface, $r=2$. Then the number of edges $e$ and the number of faces $f$ are related by $3f=2e$. We obtain that
$$3\mu_1(S) = 2\mu_2(S)$$
i.e. the simplex $\mu(S)\subset \R^2$ has a fixed slope independent of the topology of the surface and of the details of a particular triangulation. 

More specifically, if $S$ is a closed surface then 
\begin{eqnarray}
\mu_1(S) = \frac{1}{3} + \frac{\chi(S)}{e(S)}, \quad \mu_2(S) = \frac{1}{2} + \frac{\chi(S)}{f(S)},
\end{eqnarray}
where $\chi(S)$ is the Euler characteristic of $S$ and $e(S)$, $f(S)$ denote the numbers of edges and faces (i.e. 2-simplexes) in $S$. This follows from 
\cite{CCFK} page 132 and \cite{CFH}, \S 2.
}
\end{example}
The following Figures \ref{positive}, \ref{zero}, \ref{negative} show the density domains for closed surfaces $\Sigma$ depending on whether $\chi(\Sigma)$ is positive, zero or negative. 
\begin{figure}[h]
\centering
\includegraphics[width=0.4\textwidth]{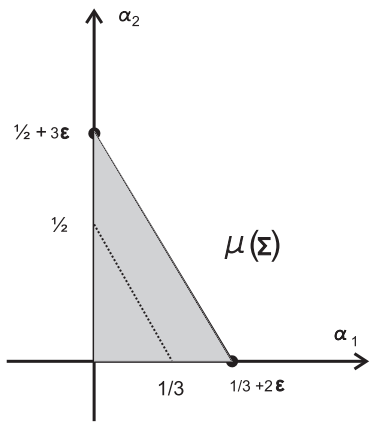}
\caption{The reduced density domain $\mu(\Sigma)$ for a triangulation of a closed surface with $\chi(\Sigma)> 0$. The number $\epsilon>0$ depends on the genus and on the number of 2-simplexes in the triangulation} \label{positive}
\end{figure}
\begin{figure}[h]
\centering
\includegraphics[width=0.5\textwidth]{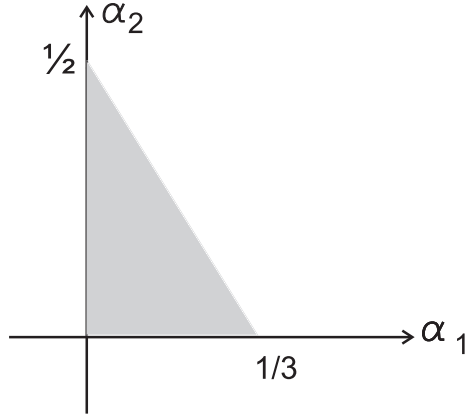}
\caption{The reduced density domain of a triangulated closed surface with $\chi(\Sigma)=0$ (i.e. $\Sigma$ is the torus or the Klein bottle).}\label{zero}
\end{figure}
\begin{figure}[h]
\centering
\includegraphics[width=0.5\textwidth]{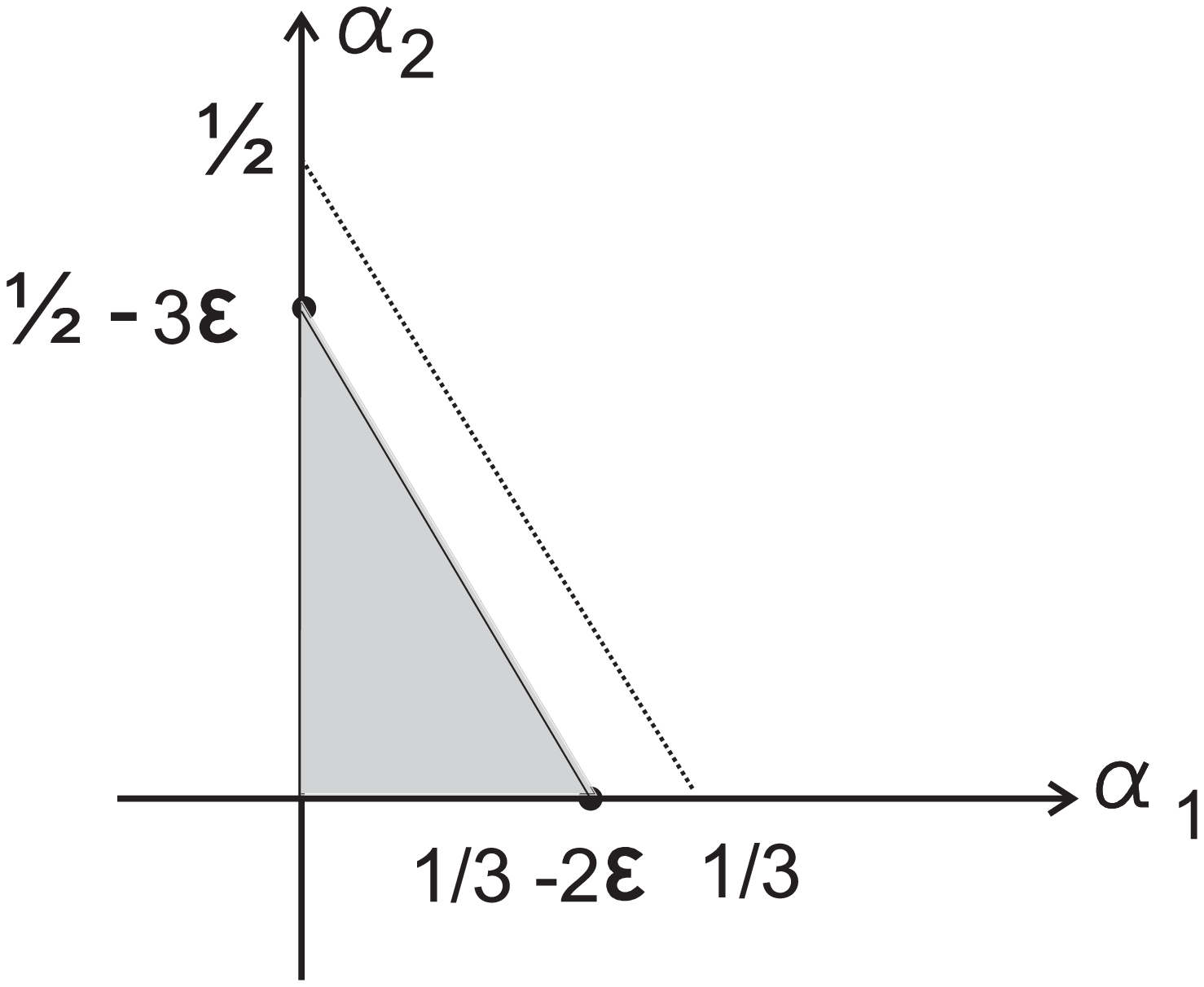}
\caption{The reduced density domain $\mu(\Sigma)$ of a triangulated closed surface with $\chi(\Sigma)<0$. 
The number $\epsilon>0$ depends on the genus and on the number of 2-simplexes in the triangulation.}\label{negative}
\end{figure}

\begin{example}{\rm 
Let $Z=P^2\cup D^2$ be the following 2-complex. Here $P^2$ is a triangulated real projective plane having a cycle $C$ of length 5 representing the non-contractible loop. 
$D^2$ is a triangulated disc with boundary of length 5 which is identified with $C$.
To compute $\mu_i(Z)$ we shall use the formulae
\begin{eqnarray}\label{ll}
\mu_1(Z) = \frac{1}{3} + \frac{\chi(Z)+ L(Z)/3}{e(Z)}, \quad \mu_2(Z) = \frac{1}{2} + \frac{\chi(Z)+L(Z)/2}{f(Z)},
\end{eqnarray}
where $e(Z)$ and $f(Z)$ denote the numbers of edges and faces of $Z$ and $L(Z)$ denotes $$\sum_e [2-\deg_Z(e)].$$ 
Here $e$ runs over the edges of $Z$ and $\deg_Z(e)$ is the number of faces of $Z$ containing $e$, see formula (2) in \cite{CF1} and formula (8) in \cite{CFH}. 
In our case, $L(Z) =-5$ and $\chi(Z) = 2$; therefore, 
\begin{eqnarray}
\mu_1(Z) = \frac{1}{3} + \frac{1}{3e(Z)}, \quad \mu_2(Z) = \frac{1}{2} - \frac{1}{2f(Z)}. 
\end{eqnarray}
\begin{figure}[h]
\centering
\includegraphics[width=0.4\textwidth]{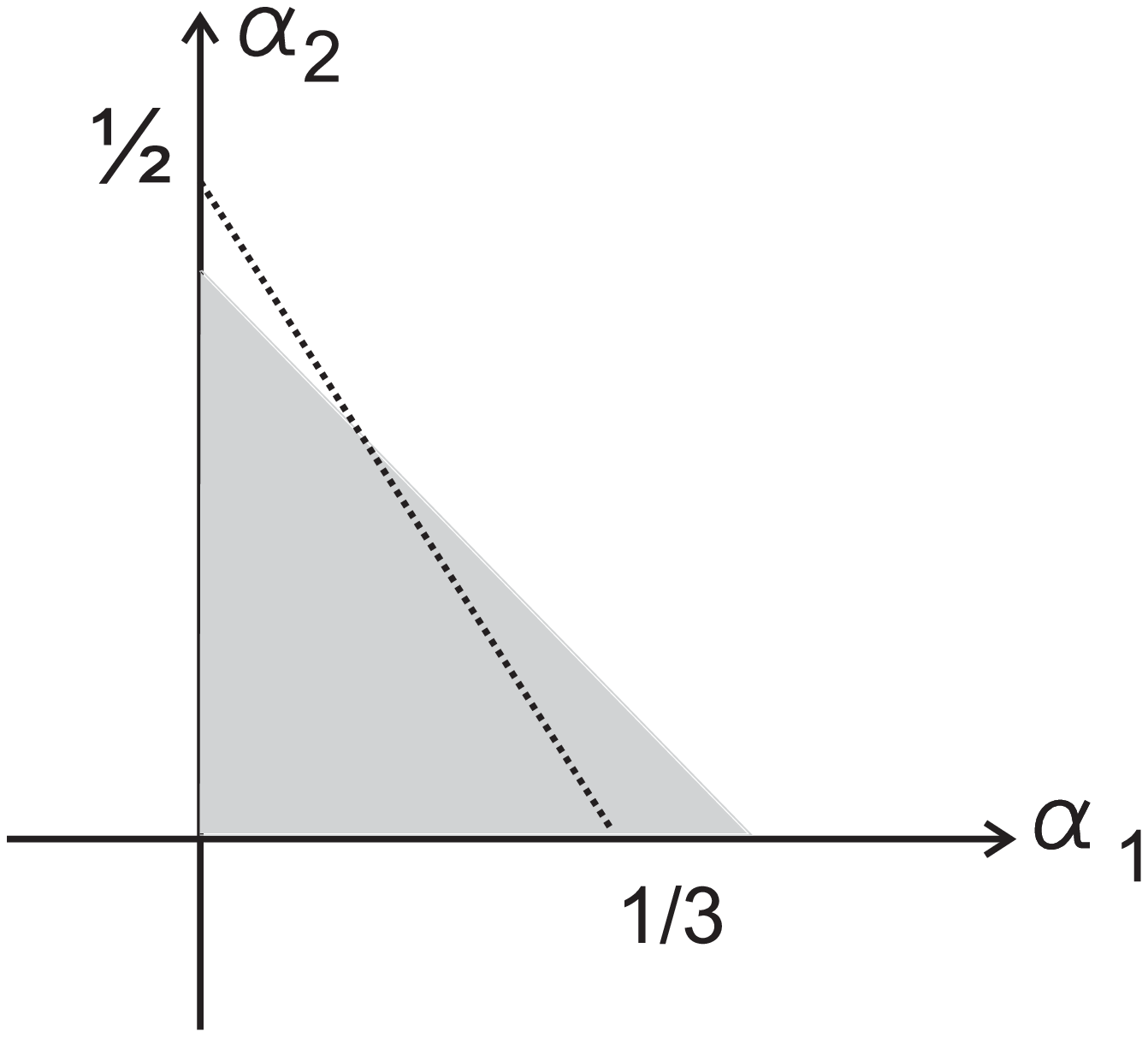}
\caption{The reduced density domain of the complex $Z$ has a lower slope than in the case of surfaces.}\label{zed}
\end{figure}
}
\end{example}

\begin{example} \label{exnice}
\begin{figure}[h]
\centering
\includegraphics[width=0.7\textwidth]{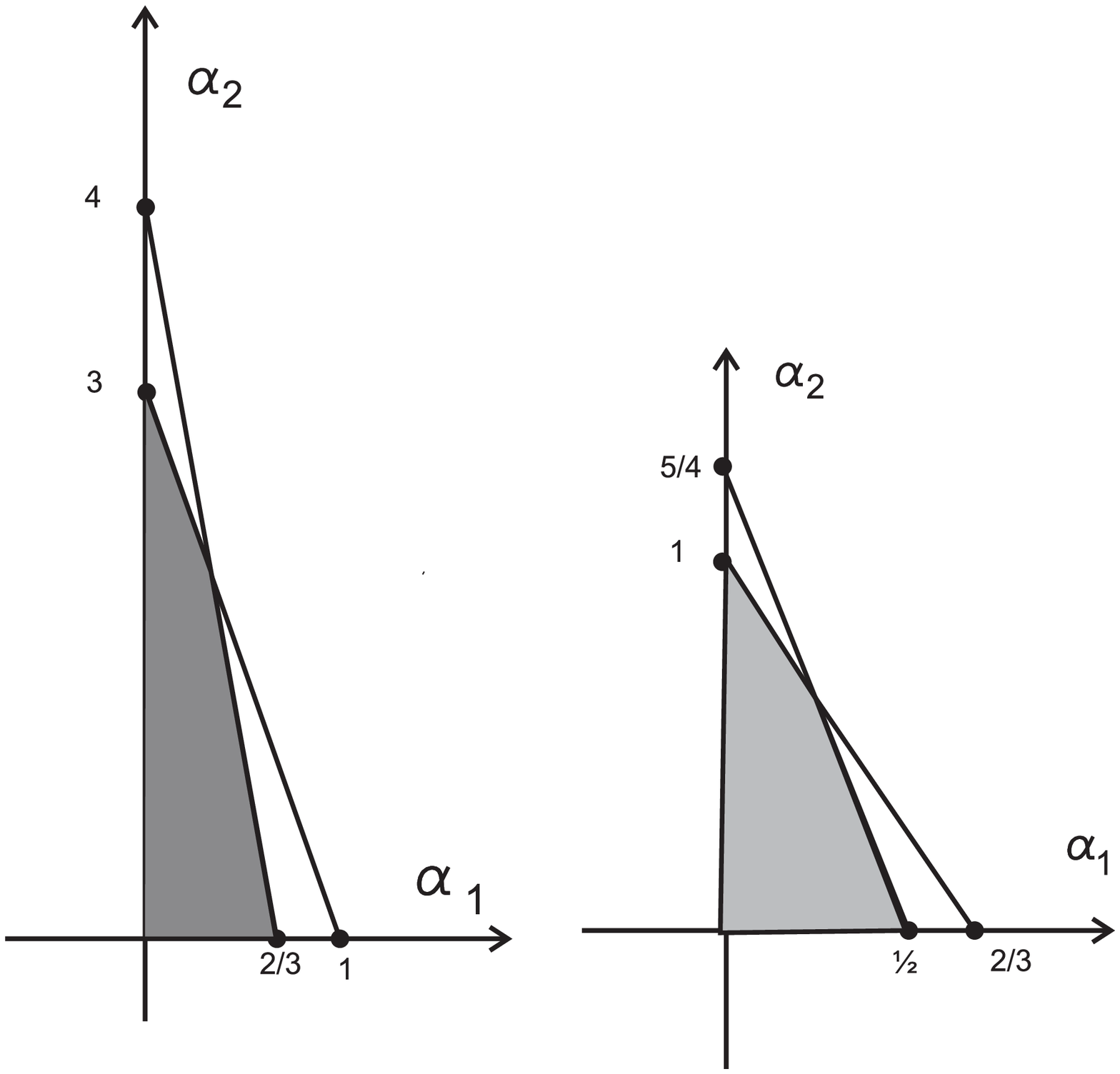}
\caption{The reduced density domain $\tilde \mu(S_t)$ of the complex $S_t$ of Example \ref{exnice} with $t=3$ (left) and $t=4$ (right).}\label{nice}
\end{figure}
{\rm For an integer $t$, let $S_t$ be a 2-complex constructed as follows. 
The vertex set $F_0(S_k)$ is $\{1, 2, \dots, t+1\}$, the set of 1-simplexes $F_1(S_t)$ is the set of all pairs $(i,j)$ 
where $1\le i<j\le t+1$ 
(i.e. the 1-skeleton of $S_t$ is a complete graph on $t+1$ vertices), and the set of 2-simplexes $F_2(S_t)$ consists of triples $(i,j,k)$ where $1\le i<j<k\le t$. 
To describe the reduced density domain $\tilde \mu(S_t)$ we shall use the formula (\ref{induced}). 
Consider a subset $W\subset F_0(S_t)=\{1, \dots, t+1\}.$ If $W$ does not contain the vertex $t+1$ 
then the induced complex $S_W$ is the 2-skeleton of the $\tau$-dimensional simplex where $|W|=\tau+1$ and we have 
\begin{eqnarray}
\mu_1(S_W)  \, = \, \frac{\tau+1}{{\binom {\tau+1} 2}}=\frac{2}{\tau}, 
\quad \mu_2(S_W)  \, = \, \frac{\tau+1}{{\binom {\tau+1} 3}}
\end{eqnarray}
If $W$ contains the last vertex $t+1$ then 
\begin{eqnarray}
\mu_1(S_W)  \, = \, \frac{\tau+1}{{\binom {\tau+1} 2}}=\frac{2}{\tau}, \quad \mu_2(S_W)  \, = \, \frac{\tau+1}{{\binom {\tau} 3}}.
\end{eqnarray}
In the first case, $\tau\le t-1$ and in the second case $\tau \le t$.
We see that 
$$\min_W \mu_1(S_W)  = \frac{2}{t}$$
(is achieved for $W=W_1=\{1, \dots, t+1\}$) and 
$$\min_W \mu_2(S_W) = \frac{t}{\binom t 3}$$
(is achieved for $W=W_2= \{1, \dots, t\}$). 
The two lines given by the equations
$$\frac{\alpha_1}{\mu_1(S_{W_i})} + \frac{\alpha_2}{\mu_2(S_{W_i})}=1,\quad i=1, 2,$$
intersect at the point
$$(\alpha_1, \alpha_2) = \left(\frac{1}{t}, \frac{3(t+1)}{t(t-1)(t-2)}\right).$$
It is easy to check that this point $(\alpha_1, \alpha_2) $ satisfies the inequality 
$$\frac{\alpha_1}{\mu_1(S_W)} + \frac{\alpha_2}{\mu_2(S_W)}\le 1,$$
for arbitrary subset $W\subset \{1, \dots, t+1\}$. This argument shows that 
in this example 
$$\tilde \mu(S_t) = \mu(S_{W_1})\cap \mu(S_{W_2})$$ and 
justifies our picture Figure \ref{nice}. 
}

\end{example}

%

\section{Balanced simplicial complexes}

\begin{definition}
We shall say that an $r$-dimensional simplicial complex $S$ is balanced if 
$$\tilde \mu(S) = \mu(S)\subset \R^r.$$
\end{definition}

The complex $Z$ of Example \ref{exnice} is unbalanced because its density domain $\tilde \mu(Z)$ is not a simplex, see Figure \ref{nice}. 

\begin{lemma} The following properties are equivalent: 

a) $S$ is balanced;

b) for any subcomplex $T\subset S$ one has 
$\mu(S)\subset \mu(T)$. 

c) for any subcomplex $T\subset S$ and for any $i=1, \dots, r$ one has 
$\mu_i(T) \ge \mu_i(S)$. 
\end{lemma}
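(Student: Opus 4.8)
The plan is to establish the two equivalences (a)$\,\Leftrightarrow\,$(b) and (b)$\,\Leftrightarrow\,$(c) separately, each of which unwinds almost directly from the definitions of $\mu(S)$ and $\tilde\mu(S)$.

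For (a)$\,\Leftrightarrow\,$(b), the first observation is that $S$ is a subcomplex of itself, so the term $\mu(S)$ occurs in the intersection $\tilde\mu(S)=\bigcap_{T\subset S}\mu(T)$; hence the inclusion $\tilde\mu(S)\subset\mu(S)$ holds unconditionally. Consequently $S$ is balanced, i.e. $\tilde\mu(S)=\mu(S)$, exactly when the reverse inclusion $\mu(S)\subset\bigcap_{T\subset S}\mu(T)$ holds, and by the definition of intersection this is equivalent to requiring $\mu(S)\subset\mu(T)$ for every subcomplex $T\subset S$ --- which is statement (b).

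For (b)$\,\Leftrightarrow\,$(c) I would argue about the containment of the truncated simplices $\mu(T)=\{(\alpha_1,\dots,\alpha_r):\alpha_i\ge0,\ \sum_{i=1}^r\alpha_i/\mu_i(T)<1\}$. The implication (c)$\,\Rightarrow\,$(b) is immediate: if $\mu_i(T)\ge\mu_i(S)$ for all $i$ then $1/\mu_i(T)\le1/\mu_i(S)$ coefficient-by-coefficient, so on the nonnegative orthant $\sum_i\alpha_i/\mu_i(T)\le\sum_i\alpha_i/\mu_i(S)$, and the defining inequality of $\mu(S)$ forces that of $\mu(T)$; hence $\mu(S)\subset\mu(T)$. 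For (b)$\,\Rightarrow\,$(c) I would test the inclusion $\mu(S)\subset\mu(T)$ on the ``axis'' points: for a fixed $i$ and $0\le t<\mu_i(S)$, the point with $i$-th coordinate equal to $t$ and all remaining coordinates equal to $0$ lies in $\mu(S)$, hence in $\mu(T)$, which gives $t/\mu_i(T)<1$, i.e. $t<\mu_i(T)$; letting $t\uparrow\mu_i(S)$ yields $\mu_i(S)\le\mu_i(T)$, and since $i$ and $T$ were arbitrary we recover (c).

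The only place that needs a word of care is the convention $\mu_i=\infty$ when the relevant face-count vanishes. The key remark is that $T\subset S$ forces $f_i(T)=0$ whenever $f_i(S)=0$, so $\mu_i(S)=\infty$ automatically gives $\mu_i(T)=\infty$, and then both the inequality in (c) and the inclusion on the $i$-th axis become trivial; in all other cases $\mu_i(S)$ is a genuine finite positive number and the axis-point computation above applies verbatim. I do not anticipate a real obstacle --- the lemma is essentially bookkeeping --- but the subtlest point is precisely that the inequality defining $\mu(\cdot)$ is strict, which is why in (b)$\,\Rightarrow\,$(c) one must pass to the limit $t\uparrow\mu_i(S)$ rather than substitute $t=\mu_i(S)$ directly.
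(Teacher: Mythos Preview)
Your proof is correct and follows exactly the approach the paper has in mind: the authors simply write ``The proof is obvious,'' and your argument is precisely the routine unwinding of the definitions of $\tilde\mu(S)$ and $\mu(S)$ that justifies this claim. Your handling of the strict inequality via the limit $t\uparrow\mu_i(S)$ and of the $\mu_i=\infty$ convention is careful and correct.
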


The proof is obvious.

\begin{example} {\rm 
Let $S$ be a 2-dimensional triangulated disc having $v$ vertices such that among them $v_i$ are internal. It is easy to check (using the Euler - Poincar\'e formula) that 
$e=2v +v_i -3$ and $f = v+v_i-2$. Hence, 
\begin{eqnarray}\label{disc}
\mu_1(S) = \frac{v}{2v+v_i - 3}, \quad \mu_2(S) = \frac{v}{v+v_i - 2}.\end{eqnarray}
Let us assume that $v_i >3$; then $\mu_1(S) <1/2$ and $\mu_2(S) <1$. 
Suppose that there exists a proper subdisc $T\subset S$ containing all the internal vertices. 
Then 
$$\mu_1(T) = \frac{v'}{2v'+v_i - 3}, \quad \mu_2(S) = \frac{v'}{v'+v_i - 2}$$
where $v'<v$ and we see that 
$$\mu_i(T)< \mu_i(S), \quad \mbox{for}\quad i=1, 2.$$

This argument shows that there exist unbalanced triangulations of the disc. 
}
\end{example}

\begin{theorem}\label{surfacebalanced}
Any triangulation of a closed surface $S$ with $\chi(S)\ge 0$ is balanced. 
\end{theorem}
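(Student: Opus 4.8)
The plan is to invoke criterion (c) of the preceding Lemma, reducing the theorem to the inequalities $\mu_i(T)\ge\mu_i(S)$ for every subcomplex $T\subset S$ and $i\in\{1,2\}$. First I would record the arithmetic of $S$ itself. Since every triangle of a closed surface has three edges and every edge lies in exactly two triangles, $3f_2(S)=2f_1(S)$; combining this with the Euler--Poincar\'e relation $f_0(S)-f_1(S)+f_2(S)=\chi(S)$ gives $f_1(S)=3f_0(S)-3\chi(S)$ and $f_2(S)=2f_0(S)-2\chi(S)$, so that
\[
\mu_1(S)=\frac{f_0(S)}{3f_0(S)-3\chi(S)},\qquad \mu_2(S)=\tfrac32\,\mu_1(S).
\]

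Next I would reduce the two inequalities to a single one. Fix a nonempty subcomplex $T\subset S$ and put $v=f_0(T)$, $e=f_1(T)$, $f=f_2(T)$. Each triangle of $T$ has all three of its edges in $T$, while each edge of $T$ lies in at most two triangles of $S$, hence in at most two triangles of $T$; counting (edge, triangle) incidences in $T$ two ways gives $3f\le 2e$. Consequently $\mu_2(T)=v/f\ge\tfrac32\,(v/e)=\tfrac32\,\mu_1(T)$ (if $f=0$ then $\mu_2(T)=\infty$ and nothing is to prove), and since $\mu_2(S)=\tfrac32\,\mu_1(S)$ it suffices to prove $\mu_1(T)\ge\mu_1(S)$.

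For the inequality $\mu_1(T)\ge\mu_1(S)$ I would use the theory of graphs on surfaces. The $1$-skeleton of $T$ is a simple graph embedded in the closed surface $S$, so for $v\ge3$ the classical Euler-characteristic estimate gives $e\le 3v-3\chi(S)$. Because $\chi(S)\ge0$, the function $t\mapsto t/(3t-3\chi(S))$ is non-increasing for $t\ge1$, and $v\le f_0(S)$, so
\[
\mu_1(T)=\frac ve\ \ge\ \frac{v}{3v-3\chi(S)}\ \ge\ \frac{f_0(S)}{3f_0(S)-3\chi(S)}=\mu_1(S).
\]
The cases $v\le2$ are immediate: then $T$ carries no triangle and at most one edge, so $\mu_1(T)\ge2$, whereas $\mu_1(S)=f_0(S)/f_1(S)\le 2/3$ since every triangulated closed surface has at least four vertices.

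The one step requiring care is the bound $e\le 3v-3\chi(S)$, which is ordinarily stated for a $2$-cell embedding, while $T^{(1)}$ may sit in $S$ non-cellularly. This is handled by replacing $S$ with a regular neighbourhood of $T^{(1)}$ and capping its boundary circles by disks: $T^{(1)}$ becomes cellularly embedded in a closed surface of Euler characteristic $\ge\chi(S)$, for which the estimate is only stronger because $\chi(S)\le2$. I expect this minor topological bookkeeping, not the main line of argument, to be the principal nuisance.
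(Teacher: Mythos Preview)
Your argument is correct and follows a genuinely different route from the paper's. The paper does not reduce to the single inequality $\mu_1(T)\ge\mu_1(S)$; instead it works with the invariant $L(T)=\sum_e(2-\deg_T e)$ and the identities
\[
\mu_1(T)=\tfrac13+\frac{\chi(T)+L(T)/3}{e(T)},\qquad
\mu_2(T)=\tfrac12+\frac{\chi(T)+L(T)/2}{f(T)},
\]
reducing both inequalities to $L(T)\ge 3\bigl(\chi(S)-\chi(T)\bigr)$. It then invokes Poincar\'e--Lefschetz duality with $\mathbf Z_2$ coefficients to identify $\chi(S)-\chi(T)$ with $\chi(S\setminus T)$, bounds the latter by the number $k$ of components of $S\setminus T$, and finishes by observing that the boundary of each complementary region is a circuit of length $\ge 3$, so $L(T)\ge 3k$.

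Your route is more elementary: the reduction to $\mu_1$ via $3f_2(T)\le 2f_1(T)$ is clean, and what remains is exactly the Heawood--Euler edge bound $e\le 3v-3\chi(S)$ for simple graphs on $S$, combined with the monotonicity of $t\mapsto t/(3t-3\chi(S))$, which is precisely where the hypothesis $\chi(S)\ge 0$ enters. The paper's argument uses a little more topology but makes the role of the complementary pieces explicit, a viewpoint that carries over more directly to analysing which triangulations of surfaces with $\chi<0$ remain balanced.

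On the bookkeeping you flag: the passage to the capped regular neighbourhood $S'$ with $\chi(S')\ge\chi(S)$ is exactly right, and for a \emph{connected} simple graph on $\ge 3$ vertices every face of the resulting cellular embedding has $\ge 3$ sides, so $2e\ge 3f$ and the bound follows. One small point to tidy: your case split ``$v\le 2$'' should be applied componentwise, not to $T$ globally; otherwise a disjoint union of isolated edges would have $v\ge 3$ while each component defeats the Euler bound. This is harmless once you note that $\mu_1$ of a disjoint union is a mediant, hence $\mu_1(T)\ge\min_i\mu_1(G_i)$, and treat each component $G_i$ separately.
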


\begin{proof} Let $S$ be a triangulated closed surface, $\chi(S) \ge 0$, and let $T\subset S$ be a proper subcomplex. 
We want to show that 
\begin{eqnarray}\label{less1}
\mu_i(T)\ge \mu_i(S), \quad \mbox{for}\quad i=1, 2.\end{eqnarray} 
Using formulae (\ref{ll}) and our assumption $\chi(S)\ge 0$ 
we see that (\ref{less1}) would follow from the inequalities 
\begin{eqnarray}\label{2ineq} L(T) \ge 3\chi(S,T), \quad L(T)\ge 2\chi(S,T),\end{eqnarray}
since $e(T)\le e(S)$ and $f(T)\le f(S)$. Here $\chi(S,T)=\chi(S) - \chi(T)$. 
Clearly, every edge of $T$ has degree 0,1, or 2 and hence $L(T) \ge 0$; therefore, (\ref{2ineq}) follows automatically if $\chi(S,T)\le 0$. 
In the case $\chi(S,T) >0$ it is enough to show the left inequality in (\ref{2ineq}).

Let $N\supset T$ be a small tubular neighbourhood of $T$ in $S$. We shall denote by $K= S- \int(N)$ the closure of the complement of $N$ in $S$ and apply Proposition 3.46 from \cite{Hatcher}. 
This Proposition states that $H^i(K; \Z_2)$ is isomorphic to $H_{2-i}(S,S-K;\Z_2)$ and is valid without the assumption that $S$ is orientable since we take $\Z_2$ coefficients. 
Thus, 
$$\chi(K) = \chi(S, S-K) = \chi(S, T).$$
%
%
%
%
If we denote by $k=b_0(K) = b_0(S-T)$ the number of connected components of $S-T$ then 
$$\chi(S,T) = \chi(T) \le b_0(K) = k$$ 
and 
(\ref{less1})  would follow from $$L(T) \ge 3k.$$
Note that $$L(T)=e_1(T)+2e_0(T)$$ where $e_i(T)$ denotes the number of edges of $T$ which have degree $i=0,1$. 

Consider the $j$-th connected component of the complement $S-T$; its set-theoretic boundary is a graph which is the image of a simplicial map $C_j\to S$ where $C_j$ is a triangulation of the circle. Denoting by $|C_j|$ the number of edges we see that 
$$\sum_{j=1}^k |C_j| = e_1(T) +2e_0(T)=L(T).$$
Indeed, the image of the maps $\sqcup_j C_j\to S$ is the union of edges of $T$ having degree 0 and 1 and each edge of degree 2 is covered twice. 
Clearly, $|C_j|\ge 3$ for each $C_j$ and 
the inequality $L(T) \ge 3k$ follows. 
\end{proof}

\begin{remark} {\rm It is easy to show that the assumption $\chi(S) \ge 0$ of Theorem \ref{surfacebalanced} is necessary. 
More specifically, {\it any closed surface with $\chi(S)<0$ admits a non-balanced triangulation.} For the proof, see \cite{CFH}, \S 2. }
\end{remark}


\begin{definition}
We shall say that $S$ is strictly balanced if for any proper subcomplex $T\subset S$ one has 
$$\mu_i(T)\ge \mu_i(S)\quad\mbox{for}\quad i=1, 2, \dots, r$$
and at least one of these inequalities is strict. 
\end{definition}

Let $S$ be a simplicial complex of dimension $r$. {\it The degree} of an $i$-dimensional simplex $\sigma\subset S$ is the number of $(i+1)$-dimensional simplexes containing $\sigma$; we denote this degree $\deg \sigma$. 

\begin{lemma}\label{reg}
Let $S$ be a connected $r$-dimensional simplicial complex with the property that the degree of any $i$-dimensional simplex $\sigma\subset S$ depends only on $i$. 
Then $S$ is strictly balanced. 
\end{lemma}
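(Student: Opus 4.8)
The plan is to show, for every proper subcomplex $T\subset S$ and every $i=1,\dots,r$, the inequality $\mu_i(T)\ge \mu_i(S)$, with at least one inequality strict; by the equivalence established after the definition of balanced, this gives strict balancedness. First I would set up the standard double-counting identity: if every $i$-simplex of $S$ has the same degree $d_i$ (the number of $(i+1)$-simplexes of $S$ containing it), then counting incident pairs $(\sigma,\tau)$ with $\sigma$ an $i$-face, $\tau$ an $(i+1)$-face of $S$, $\sigma\subset\tau$, gives
\begin{eqnarray*}
d_i\, f_i(S) \;=\; (i+2)\, f_{i+1}(S),
\end{eqnarray*}
since each $(i+1)$-simplex contains exactly $i+2$ faces of dimension $i$. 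Thus $f_{i+1}(S)/f_i(S)=d_i/(i+2)$, so $\mu_i(S)=f_0(S)/f_i(S)$ can be written as a telescoping product $\mu_i(S)=\prod_{j=0}^{i-1}\frac{j+2}{d_j}$. The key point is that this uses the global regularity hypothesis on $S$ crucially.

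Next I would estimate $\mu_i(T)$ from below using only the inequality direction of the same counting. For a subcomplex $T$, each $(i+1)$-simplex of $T$ still contains exactly $i+2$ faces of dimension $i$ lying in $T$, while each $i$-simplex $\sigma$ of $T$ is contained in at most $d_i$ simplexes of dimension $i+1$ inside $T$ (because it is contained in at most $d_i$ inside $S$). Hence $(i+2)f_{i+1}(T)\le d_i\, f_i(T)$, i.e. $f_{i+1}(T)/f_i(T)\le d_i/(i+2)$. Chaining these from $0$ up to $i-1$ gives $f_i(T)/f_0(T)\le \prod_{j=0}^{i-1}\frac{d_j}{j+2}$, that is $\mu_i(T)\ge \prod_{j=0}^{i-1}\frac{j+2}{d_j}=\mu_i(S)$, which is the desired weak inequality for all $i$.

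It remains to produce a strict inequality somewhere, and this is the main obstacle — one must rule out the degenerate possibility that equality holds for all $i$ for every proper $T$. I would argue as follows: equality $f_{i+1}(T)/f_i(T)=d_i/(i+2)$ for all $i$ forces every $i$-simplex of $T$ to have full degree $d_i$ within $T$, for all $i$; combined with connectedness of $S$ and the fact that $S$ has at least one simplex of top dimension $r$ (so all the $d_j$ are positive and the top-dimensional part is "saturated"), this would force $T$ to be a union of connected components of $S$, hence $T=S$ by connectedness, contradicting properness. The delicate case is $i=0$: if $T$ misses a vertex one should check directly that $f_0(T)<f_0(S)$ while the face-counts of $T$ cannot compensate, so that at least the $i=r$ inequality (or whichever dimension $T$ fails to be saturated in) is strict. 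I would organize this last step by taking the largest $i$ for which $T$ contains an $i$-simplex of deficient degree — such an $i$ exists since $T\ne S$ and $S$ is connected and pure-degree-regular in each dimension — and showing the corresponding inequality $\mu_i(T)>\mu_i(S)$ is then strict. This case analysis, rather than any computation, is where the real work lies.
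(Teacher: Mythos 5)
Your proposal is correct and follows essentially the same route as the paper: the double-counting identity $d_i f_i(S)=(i+2)f_{i+1}(S)$ is exactly the paper's formula expressing $\mu_i$ through average degrees, the weak inequality $\mu_i(T)\ge\mu_i(S)$ comes from $\bar d_i(T)\le d_i$ just as in the paper, and your strictness step (locating a dimension where some simplex of $T$ has deficient degree, using connectedness when a vertex is missing) is the same idea the paper implements by taking the minimal $i$ with $f_{i+1}(T)<f_{i+1}(S)$. The only difference is presentational: you treat the strictness case analysis as the hard part, whereas the paper dispatches it in one line; your outline of that step is sound.
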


For the proof we need an expression of the density invariants $\mu_i(S)$ in terms of the average degree of simplexes which is described in the following lemma.

For a simplicial complex $S$ we denote by $\bar d_i(S)$ the ratio
$$
\frac{\sum\limits_{\dim \sigma=i} \deg \sigma}{f_i(S)}. 
$$
It has the meaning of the average degree of $i$-dimensional simplexes in $S$. 

\begin{lemma} For an $r$-dimensional simplicial complex $S$ one has 
\begin{eqnarray}\label{average}
\mu_i(S) = \frac{(i+1)!}{\bar d_0(S) \cdot \bar d_1(S) \cdots \bar d_{i-1}(S)}\quad \mbox{where} \quad i=1, 2, \dots, r.
\end{eqnarray}
\end{lemma}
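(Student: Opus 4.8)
The plan is to establish the identity $\mu_i(S) = \dfrac{(i+1)!}{\bar d_0(S)\cdots \bar d_{i-1}(S)}$ by a telescoping argument, counting incidences between $j$-faces and $(j+1)$-faces in two ways for each $j$ from $0$ to $i-1$. First I would fix $j$ and count the set of pairs $(\sigma,\tau)$ where $\sigma$ is a $j$-face of $S$, $\tau$ is a $(j+1)$-face of $S$, and $\sigma \subset \tau$. Counting by $\sigma$ first gives $\sum_{\dim\sigma = j}\deg\sigma = \bar d_j(S)\cdot f_j(S)$ by the very definition of $\bar d_j(S)$. Counting by $\tau$ first gives $\sum_{\dim\tau = j+1}(j+2) = (j+2)f_{j+1}(S)$, since each $(j+1)$-simplex has exactly $j+2$ faces of dimension $j$. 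Hence $\bar d_j(S)\, f_j(S) = (j+2)\, f_{j+1}(S)$, i.e.
\begin{eqnarray*}
\frac{f_j(S)}{f_{j+1}(S)} = \frac{j+2}{\bar d_j(S)}.
\end{eqnarray*}

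Next I would telescope this relation. Writing $\mu_i(S) = f_0(S)/f_i(S) = \prod_{j=0}^{i-1} \dfrac{f_j(S)}{f_{j+1}(S)}$, and substituting the identity just obtained for each factor, gives
\begin{eqnarray*}
\mu_i(S) = \prod_{j=0}^{i-1}\frac{j+2}{\bar d_j(S)} = \frac{(i+1)!}{\bar d_0(S)\cdot \bar d_1(S)\cdots \bar d_{i-1}(S)},
\end{eqnarray*}
since $\prod_{j=0}^{i-1}(j+2) = 2\cdot 3\cdots (i+1) = (i+1)!$. This is exactly formula (\ref{average}).

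The only genuinely delicate point is bookkeeping about degenerate cases: if $f_i(S) = 0$ but some earlier $f_j(S)$ is nonzero, then $\mu_i(S) = \infty$ by convention, and one should check the right-hand side is also infinite — indeed if $f_i(S)=0$ while $f_{i-1}(S)>0$ then $\bar d_{i-1}(S) = 0$, so the formula still reads $\infty$; and if $S$ is empty or has dimension $< i$ the statement is vacuous or reduces to the same degenerate reading. Since $S$ is assumed $r$-dimensional and the formula is only asserted for $i \le r$, and connectivity is not actually needed for this particular lemma, the argument above is complete modulo these conventions; I would simply remark that the identity $\bar d_j(S) f_j(S) = (j+2) f_{j+1}(S)$ holds for all $j$ and handles the boundary cases automatically.
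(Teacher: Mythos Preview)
Your proof is correct and follows essentially the same route as the paper: both derive the key identity $\bar d_j(S)\,f_j(S) = (j+2)\,f_{j+1}(S)$ (you via an explicit double count of incidence pairs, the paper by the one-line observation $\sum_{\dim\sigma=j}\deg\sigma = (j+2)f_{j+1}(S)$) and then telescope the resulting ratios to obtain the formula. Your added remarks on the degenerate cases $f_i(S)=0$ are a nice touch but not present in the paper's version.
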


\begin{proof} We observe that
$$\sum_{\dim \sigma=i} \deg (\sigma)= (i+2) \cdot f_{i+1}(S)$$
and therefore 
$$\bar d_i(S) = (i+2)\cdot \frac{f_{i+1}(S)}{f_i(S)}.$$
Multiplying these equalities we obtain 
$$\bar d_0(S) \cdot \bar d_1(S) \cdots \bar d_{i-1}(S) = (i+1)! \cdot \frac{f_i(S)}{f_0(S)}.$$
The last equality is equivalent to the claim of the lemma.

\end{proof}

\begin{proof}[Proof of Lemma \ref{reg}] 
By assumption, $\deg^S_i (\sigma) = d_i$ for any simplex $\sigma\subset S$, $\dim \sigma=i$. Let $T\subset S$ be a proper subcomplex. 
Then $\deg_i^T(\sigma) \le d_i$ for any simplex $\sigma\subset T$, $\dim \sigma=i$. Moreover, $\deg^T_i(\sigma) < d_i$ for some $\sigma$ and some $i$ 
(for example, for the minimal $i$ such that 
$f_{i+1}(T)<f_{i+1}(S)$). 
This shows that 
$$d_i \ge \bar d_i(T), \quad i=0, 1, \dots, r-1.$$
and at least one of these inequalities is strict. Using the formula (\ref{average}) we obtain that 
$$\mu_i(S) \le \mu_i(T), \quad i= 1, \dots, r$$
and at least of these inequalities is strict. 
\end{proof}

\section{Dimension of a random simplicial complex}

Let $S$ be the abstract simplex of dimension $s\le r$. Then $S$ is strictly balanced (by Lemma \ref{reg}). 
The embeddability of $S$ into a random complex $Y\in Y_r(n, \p)$ means that $\dim Y \ge s$, hence we may use Lemma \ref{summary} to answer the question about the dimension of $Y$. 
We have 
$$\mu_i(S) = \frac{s+1}{\binom {s+1}{i+1}}, \quad i=0, 1, \dots, s$$
and 
$$\mu_i(S) =\infty\quad \mbox{for}\quad i=s+1, \dots, r.$$
Therefore, applying Lemmas \ref{reg} and \ref{summary} we see that {\it the dimension of a random simplicial complex $Y\in Y_r(n, n^{-\alpha})$ satisfies $\dim Y\ge s$ if }
\begin{eqnarray}\label{dimension}\sum\limits_{i=0}^s \alpha_i \cdot \frac{\binom {s+1} {i+1} }{s+1} \, < \, 1.\end{eqnarray}
Here $\alpha=(\alpha_0, \alpha_1, \dots, \alpha_r)$, $p_i=n^{-\alpha_i}$. 
The inequality (\ref{dimension})  can be rewritten as 
$$\alpha_0 + \alpha_1 \cdot \frac{s}{2} + \alpha_2\cdot \frac{s(s-1)}{6} +\dots +\alpha_{s}\cdot \frac{1}{s+1} <1.$$

For a vector $\alpha=(\alpha_0, \alpha_1, \dots, \alpha_r)$ with $\alpha_i\ge 0$ define the quantities 
\begin{eqnarray}
D_s(\alpha) \, =\,  \sum_{i=0}^r \, \frac{\binom {s+1} {i+1} } {s+1} \cdot \alpha_i,\quad\quad s=0, 1, \dots, r.
\end{eqnarray}
Note that $\binom {s+1} {i+1} =0$ for $i>s$. It is easy to check that 
$$\frac{\binom {s+1} {i+1} } {s+1} \le \frac{\binom {s+2} {i+1} } {s+2}$$
and hence one has
\begin{eqnarray}\label{increase} 
D_0(s) \le D_1(s) \le D_2(s) \le \dots \le D_r(s).
\end{eqnarray}
Here 
\begin{eqnarray*}
&D_0(s)& = \alpha_0,\\
&D_1(s)& = \alpha_0 +\frac{1}{2}\cdot\alpha_1,\\
&D_2(s)& = \alpha_0 +\alpha_1 + \frac{1}{3}\cdot \alpha_2,\\
&D_3(s)& = \alpha_0 +\frac{3}{2} \cdot \alpha_1 + \alpha_2 + \frac{1}{4}\cdot \alpha_3.\\
\end{eqnarray*}

We obtain the following result: 

\begin{figure}[h]
\centering
\includegraphics[width=0.65\textwidth]{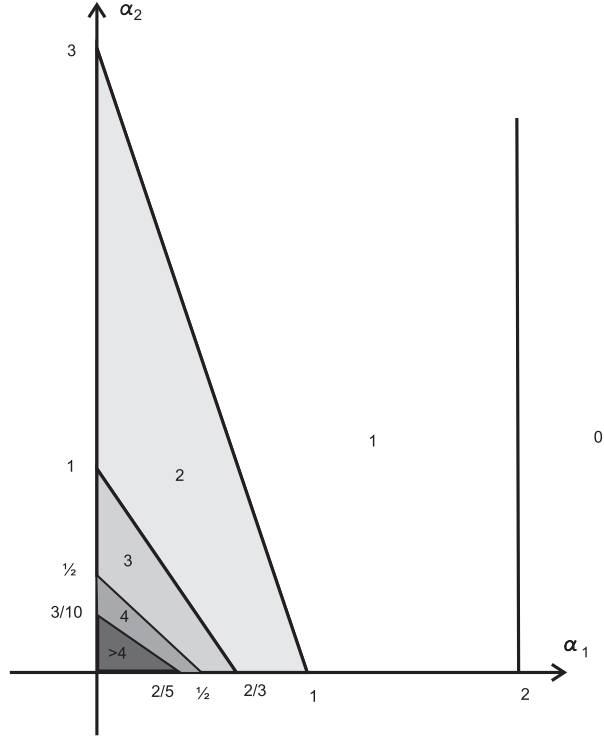}
\caption{Dimension of the random simplicial complex for various values of parameters $\alpha_1, \alpha_2$.}\label{dimension1}
\end{figure}

\begin{corollary}\label{dim1}
Given a multi-index $\alpha =(\alpha_0, \alpha_1, \dots, \alpha_r)$, $\alpha_i\ge 0$ and an integer $s$ satisfying $0\le s\le r$.  

(1) If $D_s(\alpha)<1$ then dimension of a random complex $Y\in Y_r(n, n^{-\alpha})$ satisfies
$\dim Y\, \ge \, s, $ 
a.a.s.

(2) If $D_s(\alpha) >1$ then  dimension of a random complex $Y\in Y_r(n, n^{-\alpha})$ satisfies
$\dim Y\, < \, s. $

(3) The convex domain given by the inequalities
$$D_s(\alpha) <1<D_{s+1}(\alpha), \quad \mbox{and}\quad \alpha_0\ge 0, \, \alpha_1\ge 0, \, \dots, \, \alpha_r\ge 0$$ describes the area of the multi-parameter $\alpha\in \R^{r+1}$ such that dimension of a random complex 
$Y\in Y_r(n, n^{-\alpha})$ satisfies $\dim Y= s$, a.a.s.

\end{corollary}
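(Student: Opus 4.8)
The plan is to deduce all three parts from Lemma~\ref{summary} applied to a single complex: the abstract $s$-dimensional simplex, which I will denote $S$. The first step is the elementary observation that a subcomplex $Y\subset\Delta_n^{(r)}$ satisfies $\dim Y\ge s$ if and only if $Y$ contains a simplicial subcomplex isomorphic to $S$ --- any $s$-face of $Y$ together with all of its faces is such a copy, and conversely a copy of $S$ in $Y$ exhibits an $s$-face. Thus the dimension of a random complex is governed entirely by the embeddability of the fixed complex $S$, which is precisely the situation treated by Lemma~\ref{summary}.

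The second step is to compute $\tilde\Mu(S)$ for $S$ the $s$-simplex. Since the $s$-simplex is strictly balanced by Lemma~\ref{reg}, one has $\tilde\mu(S)=\mu(S)$, hence $\tilde\Mu(S)=\Mu(S)$. Substituting $\mu_i(S)=(s+1)/\binom{s+1}{i+1}$ for $0\le i\le s$ and $\mu_i(S)=\infty$ for $s<i\le r$ into the defining inequality of $\Mu(S)$, one sees that $\alpha\in\Mu(S)$ is equivalent to $D_s(\alpha)<1$ together with $\alpha_i\ge 0$, and that the closure $\overline{\Mu(S)}$ is cut out by $D_s(\alpha)\le 1$ and $\alpha_i\ge 0$. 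With this dictionary in hand, part~(1) follows at once from Lemma~\ref{summary}(1): if $D_s(\alpha)<1$ and $\alpha_i\ge 0$ then $\alpha\in\tilde\Mu(S)$, so $S$ embeds into $Y$ a.a.s., i.e.\ $\dim Y\ge s$ a.a.s. Part~(2) follows at once from Lemma~\ref{summary}(2): if $D_s(\alpha)>1$ then $\alpha\notin\overline{\tilde\Mu(S)}$, so $S$ does not embed into $Y$ a.a.s., i.e.\ $\dim Y<s$ a.a.s.

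For part~(3) I would combine parts~(1) and~(2). The set defined by $D_s(\alpha)<1<D_{s+1}(\alpha)$ and $\alpha_i\ge 0$ is an intersection of half-spaces, hence convex; on it, part~(1) with exponent $s$ gives $\dim Y\ge s$ a.a.s., while part~(2) with exponent $s+1$ (applicable when $s<r$; for $s=r$ the bound $\dim Y\le r$ holds automatically) gives $\dim Y<s+1$ a.a.s., so $\dim Y=s$ a.a.s. Using the monotonicity~(\ref{increase}) together with the converse implications contained in parts~(1) and~(2), I would also record that this description is sharp away from the hyperplanes $\{D_s(\alpha)=1\}$: these slabs, as $s$ ranges over $0,1,\dots,r$, exhaust the positive cone and each forces a definite dimension.

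I do not anticipate a genuine obstacle, as the statement is a corollary of the already-proved Lemmas~\ref{reg} and~\ref{summary}. The only points needing care are the clean equivalence between $\dim Y\ge s$ and embeddability of $S$, the bookkeeping of closures in part~(2) (where the borderline locus $D_s(\alpha)=1$ is deliberately not covered, as is standard for threshold statements), and the use of Lemma~\ref{reg} to know that $S$ is balanced --- which is what collapses $\tilde\Mu(S)$ to $\Mu(S)$ and turns the criterion into the single inequality involving $D_s(\alpha)$.
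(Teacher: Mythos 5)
Your proposal is correct and follows essentially the same route as the paper: the paper likewise takes $S$ to be the abstract $s$-simplex, invokes Lemma \ref{reg} to conclude it is (strictly) balanced so that $\tilde\Mu(S)=\Mu(S)$, computes $\mu_i(S)=(s+1)/\binom{s+1}{i+1}$ to turn the containment criterion into the single inequality $D_s(\alpha)<1$, and then applies Lemma \ref{summary} exactly as you do. Your explicit handling of the edge case $s=r$ in part (3) and of the closure bookkeeping in part (2) are points the paper leaves implicit, but there is no substantive difference.
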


In particular we see that $\dim Y\ge 0$ if $\alpha_0<1$ in accordance with the result of Example \ref{emptyset}. 

As an illustration of Corollary \ref{dim1} consider the special case when $\alpha_0=0$ and $$\alpha_3=\alpha_4=\dots =\alpha_r=0,$$ i.e. we have only two nonzero parameters $\alpha_1$ and $\alpha_2$. 
Then Corollary \ref{dim1} implies:

\begin{itemize}
\item $\dim Y = 1$ if $\alpha_1<2$ and $\alpha_1+\frac{1}{3}\alpha_2 > 1$, a.a.s.;

\item $\dim Y = 2$ if $\alpha_1+\frac{1}{3}\alpha_2 < 1$ and $\frac{3}{2}\cdot \alpha_1 +  \alpha_2 >1$, 
a.a.s.;

\item $\dim Y = 3$ if $\frac{3}{2}\cdot \alpha_1 + \alpha_2 <1$ and $2\alpha_1+2\alpha_2>1$, a.a.s.; 

\item $\dim Y= 4$ if $2\alpha_1+2\alpha_2 <1$ and $\frac{5}{2}\cdot \alpha_1 +\frac{10}{3}\cdot \alpha_2 >1$, a.a.s.
 \end{itemize}
and so on.

Figure \ref{dimension1} depicts regions of the plane $(\alpha_1, \alpha_2)$ where dimension is $1, 2, 3, 4$ and $\ge 5$. Each of these regions is a polygonal convex domain with vertices in rational points.

\bibliographystyle{amsalpha}

\vskip 1cm
A. Costa and M. Farber: 

School of Mathematical Sciences, Queen Mary University of London, London E1 4NS
\vskip 0.7cm

\end{document}